\definecolor{red}{rgb}{1,0,0}
\definecolor{blue}{rgb}{0,0,1}
\definecolor{green}{rgb}{0,.6,0}
\newtheorem{thm}{Theorem}[section]
\newtheorem{cor}[thm]{Corollary}
\newtheorem{lem}[thm]{Lemma}
\newtheorem{prop}[thm]{Proposition}
\newtheorem{obs}[thm]{Observation}
\newtheorem{prob}[thm]{Problem}
\theoremstyle{definition}
\newtheorem{rem}[thm]{Remark}
\theoremstyle{definition}
\newtheorem{defn}[thm]{Definition}
\theoremstyle{definition}
\newtheorem{ex}[thm]{Example}
\newcommand{\G}{\mathcal{G}}
\newcommand{\Z}{\operatorname{Z}}
\newcommand{\bit}{\begin{itemize}}
\newcommand{\eit}{\end{itemize}}
\newcommand{\ben}{\begin{enumerate}}
\newcommand{\een}{\end{enumerate}}
\newcommand{\beq}{\begin{equation}}
\newcommand{\eeq}{\end{equation}}
\newcommand{\bea}{\begin{eqnarray}} 
\newcommand{\eea}{\end{eqnarray}}
\newcommand{\bpf}{\begin{proof}}
\newcommand{\epf}{\end{proof}\ms}
\newcommand{\bmt}{\begin{bmatrix}}
\newcommand{\emt}{\end{bmatrix}}
\newcommand{\ms}{\medskip}
\newcommand{\noi}{\noindent}
\newcommand{\beqs}{\begin{equation*}} 
\newcommand{\eeqs}{\end{equation*}}
\newcommand{\beas}{\begin{eqnarray*}}
\newcommand{\eeas}{\end{eqnarray*}}
\newcommand{\up}[1]{^{(#1)}}
\newcommand{\upc}[1]{^{[#1]}}
\newcommand{\floor}[1]{\lfloor #1 \rfloor}
\newcommand{\calf}{\mathcal{F}}
\newcommand{\calm}{\mathcal{M}}
\newcommand{\zf}{\operatorname{\lfloor \operatorname{Z} \rfloor}}
\newcommand{\zpf}{\floor{\operatorname{Z}_{+}}}
\newcommand{\zp}{\operatorname{Z}_{+}}
\newcommand{\pt}{\operatorname{pt}}
\newcommand{\ptp}{\operatorname{pt}_{+}}
\newcommand{\ptzpf}{\operatorname{pt}_{\zpf}}
\newcommand{\throt}{\operatorname{th}}
\newcommand{\thz}{\operatorname{th_{\Z}}}
\newcommand{\thzf}{\operatorname{th_{\zf}}}
\newcommand{\thzpf}{\operatorname{th_{\zpf}}}
\newcommand{\thp}{\operatorname{th}_{+}}
\newcommand{\thr}[1]{\operatorname{th}(#1)}
\title{Various Characterizations of Throttling Numbers}
\author{Joshua Carlson\thanks{Dept.~of Mathematics and Statistics, Williams College, Williamstown, MA, USA (jc31@williams.edu)} \and 
J\"urgen Kritschgau\thanks{Dept.~of Mathematics, Iowa State University, Ames, IA, USA (jkritsch@iastate.edu) Research is supported by NSF grant DMS-1839918} }
\date{\today}
\begin{document}
\maketitle

\begin{abstract} 
Zero forcing can be described as a graph process that uses a color change rule in which vertices change white vertices to blue. The throttling number of a graph minimizes the sum of the number of vertices initially colored blue and the number of time steps required to color the entire graph. Positive semidefinite (PSD) zero forcing is a commonly studied variant of standard zero forcing that alters the color change rule. This paper introduces a method for extending a graph using a PSD zero forcing process. Using this extension method, graphs with PSD throttling number at most $t$ are characterized as specific minors of the Cartesian product of complete graphs and trees. A similar characterization is obtained for the minor monotone floor of PSD zero forcing. Finally, the set of connected graphs on $n$ vertices with throttling number at least $n-k$ is characterized by forbidding a finite family of induced subgraphs. These forbidden subgraphs are constructed for standard throttling.
\end{abstract}

\noi {\bf Keywords} Zero forcing, propagation time, throttling, minor monotone floor, positive semidefinite, forbidden subgraphs, color change rule

\noi{\bf AMS subject classification} 05C57, 05C15, 05C50



\begin{section}{Introduction}\label{intro}
Consider a process that requires initial resources. Intuitively, changing the initial resources can change the amount of time it takes to complete the process. For a simple example, consider the process of spreading information. The set of people who initially know the information are the initial resources and the time it takes for everyone to know the information is the completion time. The general idea of throttling is to balance the amount of initial resources with the completion time in order to make the process as efficient as possible. Many of these kinds of processes can be described in the context of graph theory. An example of this is zero forcing, a process in which an initial set of blue vertices and a color change rule is used to progressively change the color of all vertices in the graph to blue. Zero forcing was introduced in \cite{AIM} as a way to bound the maximum nullity of a family of matrices corresponding to a given graph. Throttling for zero forcing was first studied by Butler and Young in \cite{BY13Throt}. Recently, the study of throttling has been expanded to include many variations of zero forcing in \cite{powerdomthrot, JCThrot, PSD} and cops and robbers in \cite{CRthrottle2, CRthrottle}.

The graphs in this paper are simple, finite, and undirected. If $G$ is a graph, $V(G)$ and $E(G)$ denote the sets of vertices and edges of $G$ respectively. The edges of a graph can be denoted as subsets or by juxtaposition of the endpoints (i.e., $uv$ is an edge if $\{u, v\} \in E(G)$). The order of a graph $G$ is $|G| = |V(G)|$. The notation $G \leq H$ is used if $G$ is a subgraph of $H$. If $G \leq H$ and $|G| = |H|$, then $G$ is a spanning subgraph of $H$. If $G$ is a minor of $H$, write $G \preceq H$. In the case that $H \leq G$, $H \leq G$ and $|H| = |G|$, or $H \preceq G$, it is said that $G$ is a supergraph, spanning supergraph, or major of $H$ respectively. For a graph parameter $p$ whose range is well-ordered, the \emph{minor monotone floor of $p$} is defined as $\floor{p}(G) = \min\{p(H) \ | \ G \preceq H\}$.

In \cite{JCThrot}, definitions are given that generalize throttling for zero forcing and many of its variants. In a graph whose vertices are white or blue, an \emph{(abstract) color change rule} for zero forcing is a set of conditions that allow a vertex $u$ to force a white vertex $w$ to become blue. If $R$ is the color change rule, it is said that $u$ $R$ forces $w$ to become blue. The $R$ can be dropped if the rule is clear context and forces are denoted as $u \rightarrow w$. Let $R$ be a given color change rule and let $G$ be a graph with $B \subseteq V(G)$ colored blue and $V(G) \setminus B$ colored white. A \emph{chronological list of $R$ forces of $B$} is an ordered list of valid $R$ forces that can be performed consecutively in $G$ resulting in a coloring in which no more $R$ forces are possible. After each force in a chronological list has been performed, the resulting set of blue vertices in $G$ is an \emph{$R$ final coloring of $B$}. The set $B$ is an \emph{$R$ forcing set of $G$} if $V(G)$ is an $R$ final coloring of $B$. The minimum size of an $R$ forcing set of $G$ is the \emph{$R$ forcing number} of $G$ and is denoted as $R(G)$. 

The set of forces in a particular chronological list of $R$ forces of $B$ is called a \emph{set of $R$ forces of $B$}. Sets of forces are used to define propagation time. If $\calf$ is a set of $R$ forces of $B$, then $\calf\up{0} = B$. For each $t \geq 0$, $\calf\up{t+1}$ is the set of vertices $w$ such that $(u \rightarrow w) \in\calf$ for some $u \in V(G)$ and $(u\rightarrow w)$ is a valid $R$ force given that $\bigcup_{i = 0}^t \calf\up{i}$ is colored blue and $V(G) \setminus \bigcup_{i = 0}^t \calf\up{i}$ is colored white. The smallest $t'$ such that $\bigcup_{i=0}^{t'} \calf\up{i} = V(G)$ is the \emph{$R$ propagation time of $\calf$ in $G$} and is denoted as $\pt_R(G; \calf)$. Note that if the forces in $\calf$ do not eventually color every vertex in $V(G)$ blue, then $\pt_R(G; \calf) = \infty$. The propagation process of $\calf$ breaks $\calf$ into time steps. At time $t=0$, $B$ is blue and $V(G) \setminus B$ is white. For each $t \geq 1$, time step $t$ starts at time $t-1$ with $\bigcup_{i=0}^{t-1} \calf\up{i}$ colored blue and performs every possible force in $\calf$ at that time (transitioning to time $t$ by coloring $\calf\up{t}$ blue). For a set $B \subseteq V(G)$, the \emph{$R$ propagation time of $B$ in $G$} is defined as $\pt_R(G; B) = \min\{ \pt_R(G; \calf) \ | \ \calf \text{ is a set of }R\text{ forces of }B\}.$ The \emph{$R$ throttling number of $B$} is $\throt_R(G; B) = |B| + \pt_R(G; B)$ and the \emph{$R$ throttling number of a graph $G$} is $\throt_R(G) = \min \{\throt_R(G; B) \ | \ B \subseteq V(G)\}$.

The \emph{(standard) zero forcing color change rule}, denoted $\Z$, is that a blue vertex $u$ can force a white vertex $w$ to become blue if $w$ is the only white neighbor of $u$. If $G$ is a graph, $\Z(G)$ is the zero forcing number of $G$ and $\Z$ forces are also called ``standard" forces. It is shown in \cite{Parameters} that the minor monotone floor of $\Z$, denoted $\zf$, can be described as a zero forcing parameter. In this variant, the initial blue vertices are considered ``active" and vertices become inactive after they perform a force. The \emph{$\zf$ color change rule} is that an active blue vertex $u \in V(G)$ can force a white vertex $w \in V(G)$ to become blue if $u$ has no white neighbors in $V(G) \setminus \{w\}$. Note that if $w$ is the only white neighbor of $u$ in $G$, then the force $u \rightarrow w$ is a standard force. If $u$ has no white neighbors in $G$, the force $u$ is said to force $w$ by ``hopping". So a $\zf$ force is either a $\Z$ force or a force by hopping. In \cite{JCThrot}, certain minors of the Cartesian product of a complete graph and a path are shown to characterize graphs $G$ with $\thz(G)$ (and $\thzf(G)$) at most $t$ for any fixed positive integer $t$. The proofs of these characterizations use a method of extending a given graph $G$ into a major of $G$ according to a set of forces. 

Suppose $G$ is a graph and $B \subseteq V(G)$ is the set of vertices in $G$ that are colored blue. Let $W_1, W_2, \ldots, W_k$ be the sets of white vertices in the $k$ connected components of $G-B$. The \emph{positive semidefinite (PSD) zero forcing color change rule}, denoted $\zp$, is that a blue vertex $u \in B$ can force a white vertex $w$ to become blue if $w$ is the only white neighbor of $u$ in $G[B \cup W_i]$ for some $1 \leq i \leq k$. PSD zero forcing can be thought of as standard zero forcing in each $G[B \cup W_i]$ and $\zp$ forces are also called PSD forces. PSD propagation and throttling are studied in \cite{PSDproptime} and \cite{PSD} before the introduction of the general definitions in \cite{JCThrot}. Consistent with the original literature, $\zp$ propagation and throttling are denoted as $\ptp$ and $\thp$ respectively.

Note that for a graph $G$ and subset $B \subseteq V(G)$, the only thing that distinguishes two sets of standard (or PSD) forces of $B$ is the vertices that are performing the forces. In other words, if $R$ is either the standard or PSD color change rule and $\calf$ is a set of $R$ forces of $B$, then the sets $\{\calf\up{i} \ | \ 0 \leq i \leq \pt_R(G; B) \}$ are unique to the choice of $B$ and do not depend on $\calf$. For this reason, we often use the conventional notation $B\up{t} = \calf\up{t}$ and $B\upc{t} = \bigcup_{i = 0}^t \calf\up{i}$ in the context of standard or PSD zero forcing. It is important to note that this convention is not possible for $\zf$ forcing because there are usually many choices for hopping. This fact motivates the general definition of throttling in \cite{JCThrot}.

Let $\calf$ be a set of forces of $B$. A maximal sequence of vertices $ v_1, v_2, \ldots , v_{\ell} $ with $(v_{i} \rightarrow v_{i +1}) \in \calf$  is called a \emph{forcing chain of $\calf$}. For each $u \in B$, $V_u$ is the set of vertices $w$ such that there is a forcing chain containing $u$ and $w$. If $\calf$ is a set of PSD forces of $B$, then the subgraph $T_u(\calf) = G[V_u]$ is a \emph{forcing tree of $\calf$}. If $k$ is a positive integer, a \emph{$k$-ary tree} is a rooted tree such that every vertex either has $k$ children or is a leaf. 

In Section 2, we define an extension technique for PSD zero forcing and use it to characterize all graphs $G$ with $\thp(G) \leq t$ as certain minors of the Cartesian product of a complete graph and a $k$-ary tree. Section 3 gives a similar characterization for a variant of PSD zero forcing that uses hopping (called the minor monotone floor of $\zp$). Finally, standard and PSD throttling numbers are characterized using forbidden induced subgraphs in Section \ref{sectFSP}.

\end{section}
\begin{section}{Throttling Positive Semidefinite Zero Forcing}\label{PSDcharSection}
In this section, a technique is given for extending a graph using a PSD zero forcing process that generalizes the extension for standard zero forcing in \cite[Definition 3.12]{JCThrot}. This extension is used to obtain a characterization for all graphs $G$ that satisfy $\thp(G) \leq t$ for any fixed positive integer $t$. In order to describe the PSD extension process, it is useful to consider a new perspective of PSD propagation.

The color change rule for PSD zero forcing requires breaking a graph into components and performing forces in each component individually. Suppose $G$ is a graph, $\calf$ is a set of PSD forces of a PSD zero forcing set $B \subseteq V(G)$, and $\ptp(G; B) = \ptp(G; \calf)$. Throughout the literature on PSD zero forcing (see \cite{PSD, PSDproptime}), each time step $t$ in the PSD propagation process of $B$ is visualized as follows. Start by removing the current set $B\upc{t-1}$ of blue vertices in $G$ to obtain components $W_1, W_2, \ldots, W_k$ of $G - B\upc{t-1}$. Then for each $1 \leq i \leq k$, perform one time step of standard zero forcing in $G[B\upc{t-1} \cup W_i]$. Finally, update the set of blue vertices in $G$ to $B\upc{t}$. Removing all blue vertices in $G$ at each time step can be misleading because it seems like the vertices that became blue in the previous time step could potentially perform a force in any of the white components. Example \ref{OldIsBadEx} demonstrates that this is not the case.

\begin{ex}\label{OldIsBadEx}
Let $G = P_5$ be the path with vertices labeled in order as $v_1$, $v_2$, $v_3$, $v_4$, and $v_5$. Let $B\upc{0}= v_3$ and consider the set of PSD forces $\calf = \{v_3 \rightarrow v_4, v_3 \rightarrow v_2, v_4 \rightarrow v_5, v_2 \rightarrow v_1$\}. Note that $G - B\upc{0}$ has two components with vertices $W_1 = \{v_4, v_5\}$ and $W_2 = \{v_1, v_2\}$ respectively. The forces $v_3 \rightarrow v_2$ and $v_3 \rightarrow v_4$ occur in the first time step and $B\upc{1} = \{v_2, v_3, v_4\}$. In the second time step, $G - B\upc{1}$ has two components with vertices $W_3 = \{v_1\}$ and $W_{4} = \{v_5\}$. Note that in $G[B\upc{1} \cup W_4]$, $v_2$ has no white neighbors. Also $v_4$ has no white neighbors in $G[B\upc{1} \cup W_3]$. So $v_2$ cannot force in $W_4$ and $v_4$ cannot force in $W_3$.
\end{ex}

In general, if a blue vertex $v$ is forced in component $W$, then $v$ cannot perform a force in any future component that is not contained in $W$. This means that it is more natural to think of the PSD propagation process in the following way. In the first time step, the set $B$ of blue vertices is removed from the graph $G$, a copy of $G[B]$ is re-attached to each component of $G-B$, and one time step of standard zero forcing is applied to each of the resulting graphs. Then, in each subsequent time step, this process is repeated on each of the smaller graphs. Note that we no longer consider an updated set of blue vertices in the original graph $G$. Also, each time step can be thought of as applying the first time step to a reduced version (i.e., induced subgraph) of the graphs obtained in the previous step. The next example illustrates this process and is used throughout this section.

\begin{ex}\label{redPSDEx}
Suppose $G$ is the graph on the left in Figure \ref{newEx} and let $B = B\upc{0} = \{1,2\}$. Consider the set of PSD forces $\calf = \{1 \rightarrow 7, 2 \rightarrow 5, 2 \rightarrow 3, 5 \rightarrow 6, 1 \rightarrow 4\}$ of $B$. In the first time step of $\calf$, $G$ breaks into two components with vertices $W_1 = \{5,6,7\}$ and $W_2 = \{3,4\}$ respectively and the forces $1 \rightarrow 7, 2 \rightarrow 5$, and $2 \rightarrow 3$ are performed simultaneously. In the second time step of $\calf$, $W_1$ and $W_2$ each break into one component with vertices $W_3 = \{6\}$ and $W_4 = \{4\}$ respectively. Then the forces $5 \rightarrow 6$ and $1 \rightarrow 4$ are performed simultaneously.
\end{ex}

\begin{figure}[H] \begin{center}
\scalebox{.85}{\includegraphics{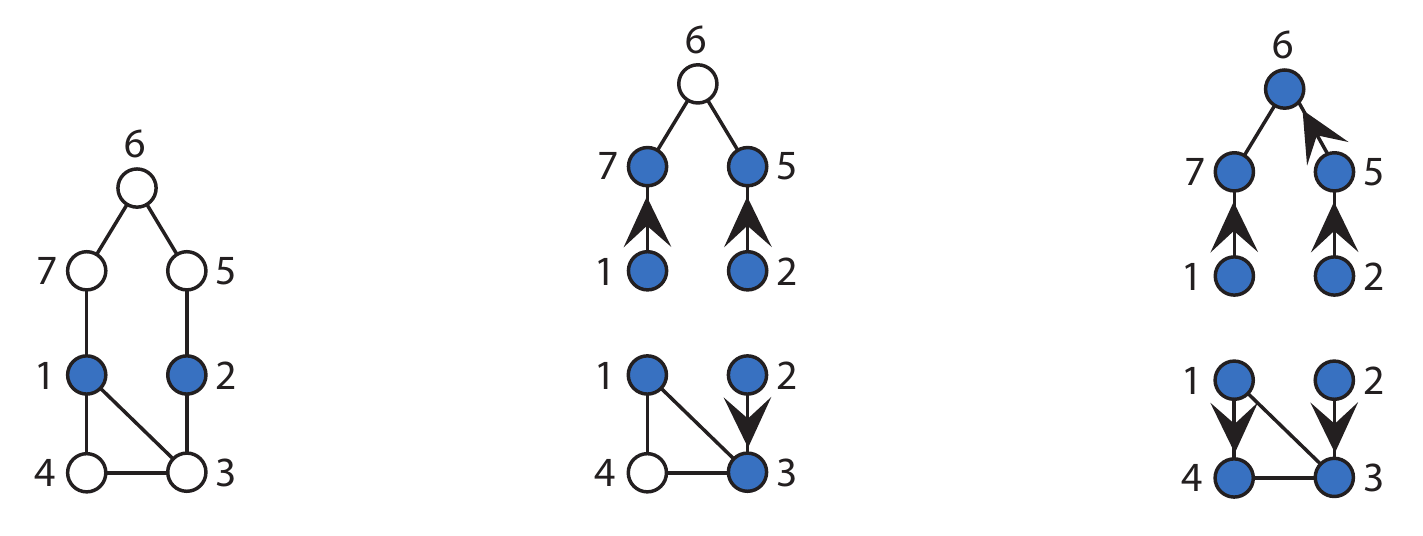}}\\
\end{center}
\hspace{1.05 in} $t=0$ \hspace{1.45 in} $t=1$ \hspace{1.55 in} $t=2$
\begin{center}
\caption{The PSD zero forcing process as seen from the reduction perspective.}\label{newEx} 
\end{center}
\end{figure}

Suppose $G$ is a graph, $B \subseteq V(G)$ is a PSD zero forcing set of $G$, and $\calf$ is a set of PSD forces of $B$ with $\ptp(G; \calf) = \ptp(G; B)$. Define $T(G; B; \calf)$ to be the rooted tree that represents the breakdown of components throughout the PSD reduction process where the edges of the tree are labeled by the components. Note that if two vertices $u$ and $v$ are equidistant from the root in $T(G; B; \calf)$, then $u$ can have a different number of children than $v$. For example, suppose  $G$ breaks into two components $W_1$ and $W_2$ in the first time step. In the second time step, suppose $W_1$ breaks into one component $W_3$ and suppose $W_2$ breaks into two components $W_4$ and $W_5$. In this case, $T(G; B; \calf)$ is the tree illustrated in Figure \ref{breakdownFig}.

\begin{figure}[H] \begin{center}
\scalebox{.85}{\includegraphics{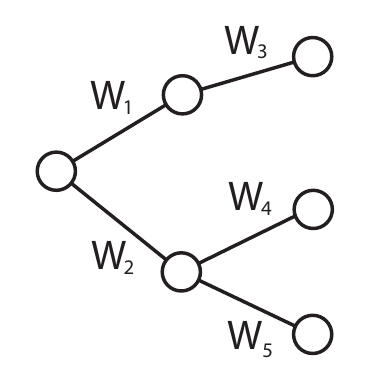}}\\
\caption{The component $W_2$ breaks into two components, but $W_1$ only breaks into one.}\label{breakdownFig} 
\end{center}
\end{figure}

For a graph $G$, PSD zero forcing set $B$, and set of PSD forces $\calf$, Definition \ref{forceTreeExtDef} (illustrated in Example \ref{forcingTreeExtEx}) uses the tree $T(G; B; \calf)$ to extend the forcing trees of $\calf$.

\begin{defn}\label{forceTreeExtDef}
For each $b \in B$, define $\mathcal{E}_b(\calf)$ to be the copy of $T(G; B; \calf)$ whose vertices are labeled as follows. 
\begin{enumerate}
\item Label the root of $T(G; B; \calf)$ as $b$.
\item Suppose $u$ is a vertex in the forcing tree $T_b(\calf)$ and $u$ becomes blue at time $t$ in component $W$. Label as $u$ the vertex in $T(G; B; \calf)$ that is distance $t$ from $b$ and is incident to the edge labeled $W$.
\item Give each remaining unlabeled vertex the label of its parent recursively.
\end{enumerate}
\end{defn}

\begin{ex}\label{forcingTreeExtEx}
Let $G$, $B$, and $\calf$ be given as in Example \ref{redPSDEx}. Then $\calf$ has two forcing trees $T_1(\calf)$ and $T_2(\calf)$. Note that $T_1(\calf)$ is a path on three vertices where vertices $7$ and $4$ are the two children of vertex $1$. Since vertex $1$ is the root of $T_1(\calf)$, the root of $\mathcal{E}_1(\calf)$ is labeled as $1$. Vertex $7$ is forced in the first time step of $\calf$ in component $W_1$ and vertex $4$ is forced in the second time step of $\calf$ in component $W_4$. The top row of Figure \ref{forcingTreeExtFig} illustrates the three steps in the construction of $\mathcal{E}_1(\calf)$. Likewise, the construction of $\mathcal{E}_2(\calf)$ is shown in the bottom row of Figure \ref{forcingTreeExtFig}.
\end{ex}

\begin{figure}[H] \begin{center}
\scalebox{.85}{\includegraphics{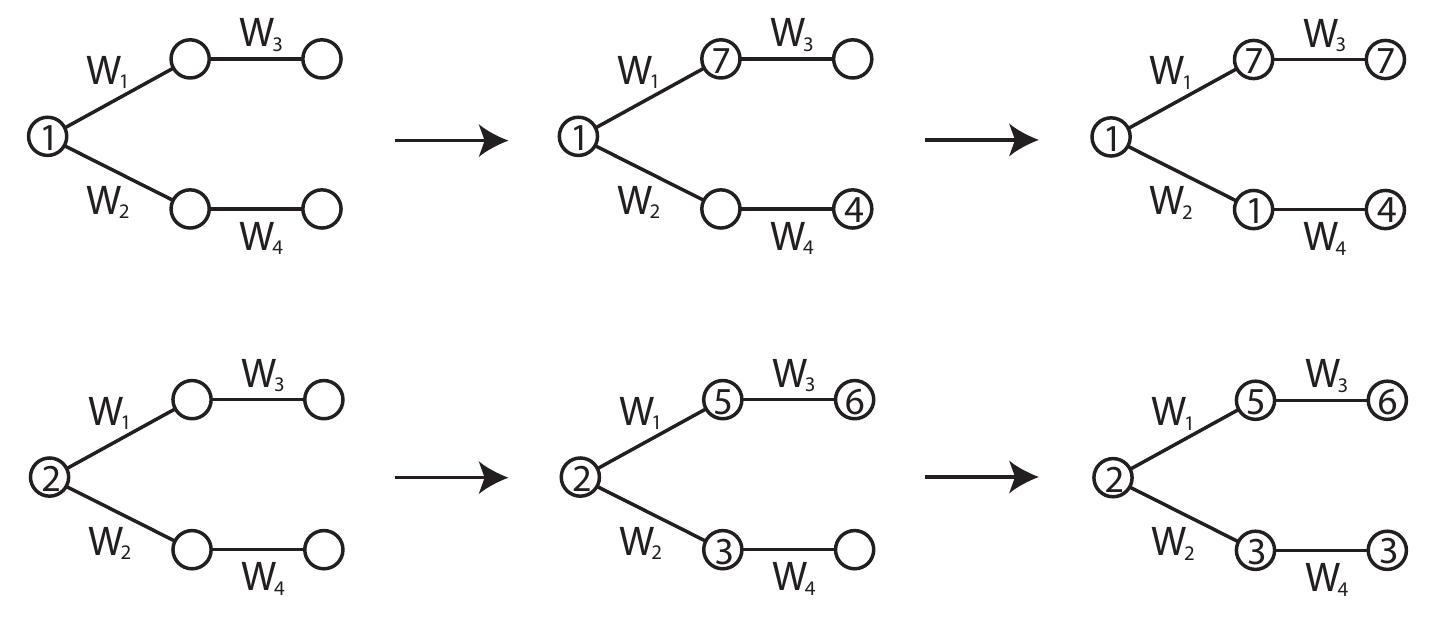}}\\
\caption{The construction of $\mathcal{E}_1(\calf)$ and $\mathcal{E}_2(\calf)$ is shown in the top and bottom row respectively.}\label{forcingTreeExtFig} 
\end{center}
\end{figure}

The next proposition concerns edges that are not contained in the forcing trees of a set of PSD forces. 
\begin{prop}\label{betweenEdgeProp}
Let $G$ be a graph with PSD zero forcing set $B \subseteq V(G)$. Suppose $\calf$ is a set of PSD forces of $B$ and $uv \in E(G)$ is not in any forcing tree of $\calf$. Choose $u',v' \in B$ such that $u \in T_{u'}(\calf)$ and $v \in T_{v'}(\calf)$. Then there exists a sequence of edge labels $W_1, W_2, \ldots, W_j$ such that starting at the root of $\mathcal{E}_{u'}(\calf)$ (respectively $\mathcal{E}_{v'}(\calf)$) and following the edges labeled $W_1, W_2, \ldots, W_j$ leads to a copy of vertex $u$ (respectively $v$).
\end{prop}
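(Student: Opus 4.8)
The plan is to follow, for each vertex, the nested sequence of white components it sits in, and to show that the component path recorded by $v$ also terminates at a copy of $u$ inside $\mathcal{E}_{u'}(\calf)$. Assume without loss of generality that $u$ turns blue no later than $v$, say $u\in B\up{t_u}$ and $v\in B\up{t_v}$ with $t_u\le t_v$. For a vertex $x$ that is white at time $s-1$, write $C_s(x)$ for the connected component of $G-B\upc{s-1}$ that contains $x$; these components are nested, $C_1(x)\supseteq C_2(x)\supseteq\cdots\supseteq C_{t_x}(x)$, and they are exactly the edge labels along the root-to-node path at whose end Definition~\ref{forceTreeExtDef}(2) places the label $x$ in $\mathcal{E}_{x'}(\calf)$. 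Since $uv\in E(G)$ and both endpoints are white at every time before $t_u$, they share a component at each such step, so $C_s(u)=C_s(v)$ for $1\le s\le t_u$; consequently the path recorded by $u$ is a prefix of the path recorded by $v$. I would therefore take $W_1,\dots,W_{t_v}:=C_1(v),\dots,C_{t_v}(v)$. This sequence reaches a copy of $v$ in $\mathcal{E}_{v'}(\calf)$ by construction, so everything reduces to showing it reaches a copy of $u$ in $\mathcal{E}_{u'}(\calf)$.

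In $\mathcal{E}_{u'}(\calf)$ the node at distance $t_u$ along this path carries the label $u$ (it is incident to the edge $C_{t_u}(v)=C_{t_u}(u)$), and the target node at distance $t_v$ is a descendant of it. By the inheritance rule of Definition~\ref{forceTreeExtDef}(3), the target inherits the label $u$ as soon as no vertex of $V_{u'}$ is forced at a time $s$ with $t_u<s\le t_v$ in the component $C_s(v)$, for then every node between $u$'s node and the target (inclusive) is unlabeled in $\mathcal{E}_{u'}(\calf)$ and copies its parent. The engine for ruling this out is a lemma I would isolate first: \emph{during a single time step, at most one vertex of a given forcing tree is forced inside a single white component.} This holds because two vertices of $V_{u'}$ forced at the same time $\tau$ in the same component $C$ have forcing chains back to $u'$ running through the same nested ancestor components $C_1\supseteq\cdots\supseteq C_\tau=C$; since a blue vertex performs at most one force in a given component during a single time step, an upward induction (starting from the common root $u'$) shows the two chains coincide.

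The crux, and the step I expect to be the main obstacle, is this no-interference claim, since a second branch of $T_{u'}(\calf)$ could a priori re-enter $v$'s component after $u$ has turned blue. I would argue by taking the earliest offending time $s\in(t_u,t_v]$, with offending vertex $w\in V_{u'}$ forced in $C_s(v)$; its forcer $p$, being the parent of $w$ in its forcing chain, also lies in $V_{u'}$. One checks that $p$ is forced one step earlier, at time $s-1$, and lies in $C_{s-1}(v)$; if $s-1>t_u$ this makes $p$ an earlier offender, contradicting minimality, so $s=t_u+1$ and $p$ is forced at time $t_u$ in $C_{t_u}(v)=C_{t_u}(u)$. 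Since $u$ is also forced at that time in that same component, the lemma gives $p=u$. But $u$ is adjacent to $v$, and $v$ is still white and lies in $C_{t_u+1}(v)$ alongside $w$ (here $v\ne w$, because $uv$ is not a tree edge forces $u$ and $v$ into different forcing trees); thus $u$ would have the two distinct white neighbours $v$ and $w$ in $G[B\upc{t_u}\cup C_{t_u+1}(v)]$, contradicting the validity of the PSD force $u\to w$. This contradiction shows there are no offending times, so the inheritance rule labels the target node $u$, and the common sequence $W_1,\dots,W_{t_v}$ has the desired property.
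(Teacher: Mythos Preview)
Your approach is the same as the paper's: set $W_1,\dots,W_{t_v}$ to be the nested sequence of components containing $v$ and argue that following this path in $\mathcal{E}_{u'}(\calf)$ ends at a node labelled $u$. The paper's own proof is very brief --- it observes that $u$ itself cannot force in any $W_s$ with $t_u<s\le t_v$ (since $v$ is a white neighbour of $u$ there) and then simply asserts the conclusion. You are trying to justify the step the paper leaves implicit, namely that no \emph{other} vertex of $V_{u'}$ is forced along that stretch either, which is exactly what Definition~\ref{forceTreeExtDef}(3) needs.

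However, there is a genuine gap in your argument. The claim ``$p$ is forced one step earlier, at time $s-1$'' is false in general: the parent $p$ of the offending vertex $w$ may have turned blue at any time $t_p\le s-1$ and then waited, because it still had a second white neighbour in the component containing $w$. Minimality of $s$ then only gives $t_p\le t_u$, not $t_p=t_u$, so when $t_p<t_u$ your single-time-step lemma cannot be invoked and the conclusion $p=u$ does not follow.

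The fix is a mild strengthening of your lemma: any two forcing chains in $T_{u'}(\calf)$ all of whose vertices are forced along the fixed component sequence $W_1\supseteq W_2\supseteq\cdots$ must be comparable (one a prefix of the other). The proof is the same ``two white neighbours'' trick you already use: if the chains first diverge after a common vertex $q$ to distinct children $c_1,c_2$, both forced in the $W$-sequence, then whichever child is forced later is still a second white neighbour of $q$ in the component where $q$ forces the earlier one. Since every ancestor of $w$ (and of $u$) is forced in the $W$-sequence, applying this to the chains from $u'$ to $u$ and from $u'$ to $w$ shows that $u$ is an ancestor of $w$ in $T_{u'}(\calf)$. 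Hence $u$ itself forces the next vertex of that chain inside some $W_\tau$ with $t_u<\tau\le t_v$, and now your final contradiction (with $v\neq$ that child as a second white neighbour of $u$) goes through.
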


\begin{proof}
Suppose $u$ becomes blue at time $i$ and $v$ becomes blue at time $j$ with $i \leq j$. Let $W_1$, $W_2$, \ldots , $W_j$ be the sequence of components that contain $v$ during the first $j$ time steps of $\calf$. Therefore, the path in $\mathcal{E}_{v'}(\calf)$ obtained by starting at the root $v'$ and following the edges labeled $W_1$, $W_2$, \ldots, $W_j$ leads to a vertex labeled $v$. Since $uv \in E(G)$, $u$ is in components $W_1$, $W_2$, \ldots $W_i$ and $u$ cannot force in a future component contained in $W_i$ until $v$ becomes blue in time step $j$. Once $u$ becomes blue in component $W_i$, $u$ remains in the set of blue vertices that are attached to every future component that is contained in $W_i$. So $u$ is a blue vertex in the graph in which $v$ is forced in time step $j$. Thus, the path in $\mathcal{E}_{u'}(\calf)$ obtained by starting at the root $u'$ and following the edges labeled $W_1, W_2, \ldots, W_j$ leads to a copy of $u$. 
\end{proof}

Proposition \ref{betweenEdgeProp} is used in the following definition which extends a given graph $G$ to a major of $G$ using a set of PSD forces. 
\begin{defn} \label{newFullExtDef}
Suppose $G$ is a graph and $\calf$ is a set of PSD forces of a PSD zero forcing set $B \subseteq V(G)$ such that $\ptp(G;\calf) = \ptp(G; B)$. For each edge $uv \in E(G)$, let $t(uv)$ denote the earliest time in $\calf$ at which both $u$ and $v$ are blue. For each $v \in V(G)$, let $r(v)$ be the unique vertex in $B$ such that $v \in T_{r(v)}(\calf)$. Define the \emph{(PSD) extension of $G$ with respect to $B$ and $\calf$}, denoted $\mathcal{E}_+(G;B;\calf)$, to be the graph obtained by the following procedure.
\begin{enumerate}
\item Construct the graph $G_1= \dot{\bigcup}\{\mathcal{E}_b(\calf) \ | \ b \in B\}$.
\item For each edge $v_1v_2 \in E(G)$ with $v_1, v_2 \in B$, add to $G_1$ the edge that connects the root of $\mathcal{E}_{v_1}(\calf)$ to the root of $\mathcal{E}_{v_2}(\calf)$. Call the resulting graph $G_2$.

\item For each edge $v_1v_2 \in E(G)$ with $\{v_1,v_2\} \nsubseteq B$ that is not in any forcing tree of $\calf$, add to $T_2$ the edge that connects the copies of $v_1$ and $v_2$ that are distance $t(v_1v_2)$ away from the roots in $\mathcal{E}_{r(v_1)}(\calf)$ and $\mathcal{E}_{r(v_2)}(\calf)$ respectively.
\end{enumerate}
\end{defn}

 Note that 3. in Definition \ref{newFullExtDef} is possible by Proposition \ref{betweenEdgeProp}.
 
\begin{ex} \label{newFullExtEx}
Let $G$, $B$, and $\calf$ be the graph, PSD zero forcing set, and set of PSD forces given in Example \ref{redPSDEx}. Note that the edge $\{3, 4\}$ is not in either of the forcing trees of $\calf$. Vertex $4$ becomes blue after vertex $3$ and vertex $4$ is contained consecutively in the components $W_2$ and $W_4$. Therefore, there is an edge in $\mathcal{E}_{+}(G; B; \calf)$ that connects the vertices obtained by starting at the roots of $\mathcal{E}_1(\calf)$ and $\mathcal{E}_2(\calf)$ and following the edges $W_2$ and $W_4$. The graph $G$ is shown alongside the extension $\mathcal{E}_{+}(G; B; \calf)$ in Figure \ref{newFullExFig}.
\end{ex}

\begin{figure}[H] \begin{center}
\scalebox{.85}{\includegraphics{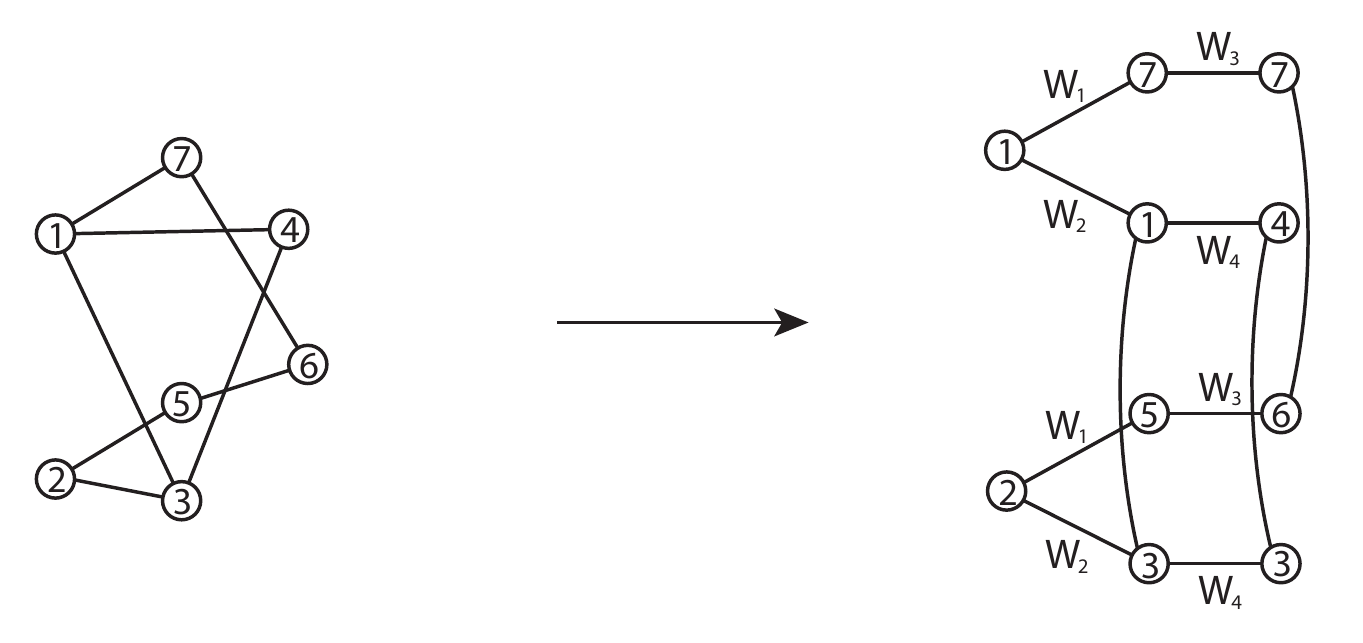}}\\
\end{center}
\begin{center}
\vspace{-.028 in}
\caption{A graph $G$ (left) is shown before and after its extension $\mathcal{E}_{+}(G; B; \calf)$ (right).}\label{newFullExFig} 
\end{center}
\end{figure}

\begin{rem}
In the case where each forcing tree of $\calf$ is a path, $\calf$ is a set of standard forces. In this case, $\mathcal{E}_+(G, B, \calf)$ is equal to the extension $\mathcal{E}(G, B, \calf)$ that is defined for standard zero forcing in \cite[Definition 3.12]{JCThrot}. Thus, Definition \ref{newFullExtDef} generalizes the extension given in \cite{JCThrot} to PSD zero forcing.
\end{rem}

\begin{lem}\label{treeContracLem}
If $G$ is a graph, $B \subseteq V(G)$ is a PSD zero forcing set of $G$, and $\calf$ is a set of PSD forces of $B$ with $\ptp(G; \calf) = \ptp(G; B)$, then contracting an edge in a forcing tree of $\calf$ does not increase the PSD propagation time of $\calf$.
\end{lem}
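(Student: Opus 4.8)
The plan is to contract the edge and transport the forces. Write the contracted edge as $uv$, where $u \to v \in \calf$ is the force along it (so $u$ is the parent of $v$ in its forcing tree and, since $v$ is forced, $v \notin B$); I take $uv$ to be a forcing edge, which is the essential case. Let $G' = G/uv$ and let $\phi \colon V(G) \to V(G')$ be the contraction map, which identifies $u$ and $v$ to a single vertex $\phi(u) = \phi(v)$ and is injective elsewhere; because $v \notin B$, we have $\phi(B) = B$. Define $\calf'$ to be the image of $\calf$ with the contracted force removed, that is, $\calf' = \{\phi(a) \to \phi(c) : (a \to c) \in \calf,\ (a \to c) \neq (u \to v)\}$, so the merged vertex inherits exactly the forces that $u$ and $v$ performed, minus $u \to v$. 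I would then show that $\calf'$ is a valid set of PSD forces of $B$ in $G'$ with $\ptp(G'; \calf') \le \ptp(G; \calf)$, which is the assertion of the lemma.

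The heart of the argument is that the merge affects only the component in which $v$ is forced. Let $W$ be the component containing $v$ at the time $u \to v$ is performed. Every white neighbor of $v$ in $G$ lies in $W$: a between-edge $vz$ (governed by Proposition \ref{betweenEdgeProp}) contributes a white neighbor only when $z$ is white and hence in the same component as $v$, while the children of $v$ in its forcing tree are forced inside sub-components of $W$. Consequently the only new adjacencies acquired by the merged vertex $\phi(u)$ lie within $W$. Since components only refine as forcing proceeds, any force that $u$ performs into a component other than $W$ is completely unaffected by the merge, in both validity and timing; and among white vertices the induced subgraph of $G'$ agrees with that of $G$ once $\phi(u)$ is blue, so the component structure outside $W$ is unchanged.

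Inside $W$ the effect of the contraction is that the position occupied by $v$ becomes blue at the earlier time at which $u$ becomes blue, and $\phi(u)$ takes over every force previously performed by $v$. To make this precise I would prove, by induction on $t$, the invariant that $\calf'$ is a legitimate list of PSD forces through time $t$ and that $\phi(B\upc{t}) \subseteq \bigcup_{i=0}^{t} \calf'\up{i}$. The base case $t = 0$ is immediate from $\phi(B) = B$. For the inductive step, each force $a \to c$ performed by $\calf$ at time $t+1$ is treated by cases: if $(a \to c) = (u \to v)$ then $\phi(c) = \phi(u)$ is already blue and there is nothing to do; otherwise $\phi(a) \in \bigcup_{i=0}^{t} \calf'\up{i}$ by the inductive hypothesis, and because adding blue vertices only removes white neighbors and refines components, $c$ is still the unique white neighbor of $\phi(a)$ in its component of $G'$ (unless $\phi(c)$ is already blue, which is equally acceptable). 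Hence $\phi(c)$ is blue under $\calf'$ by time $t+1$. Taking $t = \ptp(G; \calf)$ yields $\phi(V(G)) = V(G')$ blue under $\calf'$ within $\ptp(G; \calf)$ steps, the desired inequality.

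The main obstacle is the validity bookkeeping in the inductive step under the PSD rule: turning vertices blue early is intuitively only helpful, yet one must verify that it never destroys the uniqueness condition that licenses a force, and that the merged vertex's inherited adjacencies (all inside $W$) never introduce a second white neighbor in a component outside $W$. The structural observation of the second paragraph, that every white neighbor of $v$ lives in $W$ and that components only refine, is exactly what rules this out, and I expect the bulk of the write-up to consist of making this monotonicity of PSD forcing under ``earlier blue'' precise, together with the routine separate handling of the times $t$ up to the blue time of $u$, at which $u$ and $v$ are both still white.
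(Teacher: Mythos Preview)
Your approach is correct but genuinely different from the paper's. The paper proceeds by induction on $\ptp(G;\calf)$: it first observes (via \cite[Lemma 3.15]{JCThrot}) that a vertex $v$ forced in the final time step, within its component $W$, can be adjacent only to its forcer and to leaves of forcing trees, so contracting a last-step forcing edge is harmless; for an earlier edge $e$, the paper deletes the set $S$ of last-forced vertices, applies the induction hypothesis to $G-S$ (whose propagation time is at most $t-1$), and then reattaches $S$ at the ends of the forcing trees for one additional step. Your argument instead transports the entire force set through the contraction map and verifies, by induction on the time step, that each transported force remains a valid PSD force (or is rendered unnecessary because its target is already blue). The structural observation you isolate---that every white neighbor of $v$ lies in $W$, so the merged vertex acquires no white neighbors in any other component---is exactly what makes the case analysis close; this plays the same role as the paper's ``last-step vertices are only adjacent to leaves'' fact, but you use it at an arbitrary time rather than only at the final step. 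Your route is more explicit about the resulting set of forces in $G/e$, at the cost of the longer case verification you anticipate; the paper's route is shorter but relies on the external lemma and the delete--contract--reattach maneuver.
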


\begin{proof}
Consider induction on $\ptp(G;\calf)$. If $\ptp(G; \calf) = 0$, then there are no edges in any forcing tree of $\calf$ and Lemma \ref{treeContracLem} is vacuously true. Assume Lemma \ref{treeContracLem} holds for any $G'$ and $\calf'$ with $0 \leq \ptp(G'; \calf') \leq t-1$ and suppose that $G$ and $\calf$ satisfy $\ptp(G; \calf) = t$. It is shown in the proof of \cite[Lemma 3.15]{JCThrot} that in standard zero forcing, a vertex $v$ that is forced in the last time step can only be adjacent to the vertex that forced $v$ and vertices that do not perform a force. Therefore, if $v \in V(G)$ is forced during time step $t$ in component $W$, then $v$ can only be adjacent to the vertex that forced $v$ and other vertices in $W$ that are leaves of a forcing tree of $\calf$. So if $e$ is an edge that is used to perform a PSD force in $\calf$ during time step $t$, then contracting $e$ does not increase the PSD propagation time of $\calf$.

Now suppose $e = uv$ is an edge such that $u \rightarrow v$ in time step $i$ of $\calf$ for some $i < t$. Label the vertices of $G$ as $v_1, v_2, \ldots, v_{|G|}$ and let $G/e$ be the graph obtained from $G$ by contracting $e$ and labeling as $v$ the new vertex that is formed as a result of the contraction. Let $S$ be the set of vertices in $G$ that are forced last in $\calf$. Obtain the graph $G/e$ as follows. First, delete the vertices in $S$ from $G$. Next, contract the edge $e$. Finally, add the vertices in $S$ back to the graph preserving the original neighborhood of each vertex in $S$. Note that $\ptp(G-S; \calf) \leq t-1$. So by the induction hypothesis, the PSD propagation time of $\calf$ after contracting $e$ is also at most $t-1$. The vertices in $S$ are added back to the graph at the end of the forcing trees and each vertex in $S$ will become blue simultaneously in the final time step. Therefore, $\ptp(G/e; \calf) \leq t-1 +1 = t$.
\end{proof}

Recall that the \emph{depth} of a vertex $v$ in a rooted tree $T$ is the distance from $v$ to the root and the height of $T$ is the maximum depth of the vertices in $T$. For integers $k > 0$ and $b \geq 0$, let $T_{k,b}$ denote the rooted tree of height $b$ such that every vertex of depth less than $b$ has $k$ children. If $G$ is a graph of the form $K_a \square T_{k,b}$, define the \emph{tree edges} of $G$ to be the edges in each copy of $T_{k,b}$ in the Cartesian product. Likewise, define the \emph{complete edges} of $G$ to be the edges in each copy of $K_a$ in the Cartesian product. Similar to standard throttling, the extension in Definition \ref{newFullExtDef} can be used to give a structural characterization of graphs with a given PSD throttling number.

\begin{thm}\label{psdCharThm}
Suppose $G$ is a graph and $t$ is a fixed positive integer. Then $\thp(G) \leq t$ if and only if there exists integers $a,k > 0$ and $b \geq 0$ such that $a + b = t$ and $G$ can be obtained from $K_a \square T_{k,b}$ by contracting tree edges and/or deleting complete edges.
\end{thm}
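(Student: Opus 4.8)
The plan is to prove the two implications separately, using the root set of the product for sufficiency and the extension $\mathcal{E}_+(G;B;\calf)$ as the bridge for necessity.

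For sufficiency ($\Leftarrow$), I would first check that $K_a \square T_{k,b}$ itself satisfies $\thp \le a+b$. Let $B_0$ be the set of $a$ roots, one in each copy of $T_{k,b}$. Deleting $B_0$ splits the product into components indexed by the children of the root, and in each component the unique tree-neighbor of each root is forced; iterating drives the blue color down one tree level per time step, so $\ptp(K_a\square T_{k,b};B_0)=b$ and hence $\thp(K_a\square T_{k,b})\le a+b=t$. I would then maintain, through each allowed operation, the invariant that the image of $B_0$ is a PSD forcing set propagating in at most $b$ steps. Deleting a complete edge removes only an edge between two copies at the same tree position; since every force is along a tree edge and deleting edges can only refine components and remove potential white neighbors, each force $(\text{parent})\to(\text{tree-child})$ stays valid. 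Contracting a tree edge contracts an edge of a forcing tree of this root-forcing, so by Lemma \ref{treeContracLem} the propagation time does not increase; because tree edges lie inside a single copy, the images of the $a$ roots remain distinct, so $|B_0|$ is unchanged. Thus $\thp(G)\le a+b=t$.

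For necessity ($\Rightarrow$), suppose $\thp(G)\le t$ and choose $B$ and a set of PSD forces $\calf$ with $|B|+\ptp(G;\calf)=\thp(G)$. Set $a=|B|$ and $b=\ptp(G;\calf)$; after possibly enlarging $b$ (replacing $T_{k,b}$ by a taller tree and later contracting the extra bottom levels as padding) I may assume $a+b=t$. Let $k$ be the maximum number of children of a vertex of the breakdown tree $T(G;B;\calf)$, so that $T(G;B;\calf)$ embeds as a rooted, depth-preserving subtree of $T_{k,b}$. Identifying the $a$ copies of $T_{k,b}$ in $K_a\square T_{k,b}$ with the extensions $\mathcal{E}_b(\calf)$ (labeling each padding vertex by its parent), I would realize $G$ from $K_a\square T_{k,b}$ as follows: delete every complete edge except those joining the roots that correspond to edges of $G[B]$ (Step 2 of Definition \ref{newFullExtDef}) and those realizing the between-edges of $\calf$ (Step 3); and contract every tree edge that is either a padding edge or joins two equally labeled vertices. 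Contracting the equal-label edges collapses each $\mathcal{E}_b(\calf)$ onto the forcing tree $T_b(\calf)$, contracting padding edges removes the vertices of $T_{k,b}$ outside the breakdown tree, and the surviving complete edges become exactly the edges of $G$ not internal to a single forcing tree.

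The crux, and the step I expect to require the most care, is verifying that these operations reproduce $G$ exactly, with no edge lost or spuriously created. Here Proposition \ref{betweenEdgeProp} is essential: for a between-edge $uv$ it guarantees that the copies of $u$ and $v$ at depth $t(uv)$ occupy the \emph{same} breakdown-tree position in their respective extensions, so the kept complete edge joins precisely $u$ and $v$ after the contractions. I must also confirm that each edge internal to a forcing tree is recovered by the parent--child structure of the breakdown tree after the duplicate contractions (using that $T_u(\calf)=G[V_u]$ is an induced tree, so its only edges are forcing edges, and that a duplicate label persists down each component path until it is overwritten, which bridges a force $u\to v$ even when $v$ is forced several steps after $u$). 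Finally, I must check that contracting a tree edge never attaches a surviving complete edge to the wrong position; this holds because padding vertices carry no surviving complete edges and an equal-label contraction does not change which vertex of $G$ a position represents.
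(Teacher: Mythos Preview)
Your proposal is correct and follows essentially the same route as the paper: for $(\Leftarrow)$ you use the root copy of $K_a$ as the PSD forcing set, invoke Lemma~\ref{treeContracLem} for tree-edge contractions, and observe that deleting complete edges cannot slow propagation; for $(\Rightarrow)$ you pass through the extension $\mathcal{E}_+(G;B;\calf)$, embed the breakdown tree into $T_{k,b}$, and recover $G$ by contracting equal-label (and padding) tree edges while keeping only the complete edges dictated by Steps~2--3 of Definition~\ref{newFullExtDef}. The paper's own proof is terser on the verification that the contractions reproduce $G$ exactly, but your more explicit use of Proposition~\ref{betweenEdgeProp} and the duplicate-label argument is precisely what underlies its one-line claim.
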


\begin{proof}
Suppose $\thp(G) = t' \leq t$. Let $\calf$ be a set of PSD forces of a PSD zero forcing set $B\subseteq V(G)$ with $\ptp(G; \calf) = \ptp(G; B) = b'$. Choose $a = |B|$ and let $k$ be the maximum number of components in any time step of the PSD reduction process of $\calf$. Then $\mathcal{E}_+(G;B;\calf)$ can be obtained from $K_{a} \square T_{k,b'}$ by contracting tree edges and/or deleting complete edges. Note that $G$ can be obtained from $\mathcal{E}_+(G;B;\calf)$ by contracting the tree edges whose endpoints have the same label. Finally, if $b = t - a$, then $K_{a} \square T_{k,b'}$ can be obtained from $K_{a} \square T_{k,b}$ by contracting tree edges. Note that $a + b = t$.

Now suppose $G$ can be obtained from $K_a \square T_{k,b}$ by contracting tree edges and/or deleting complete edges. Let $B$ be the vertices in the copy of $K_a$ that corresponds to the root of $T_{k,b}$. Choose $\calf$ to be the set of PSD forces of $B$ obtained by having each vertex in every copy of $T_{k,b}$ force each of its children in that copy. Note that $\ptp(G;\calf) = b$ because no vertex is required to wait for multiple time steps in order to perform a force. This means that $\thp(K_a \square T_{k,b}) \leq a + b$. The tree edges of $K_a \square T_{k,b}$ are exactly the edges used in the forcing trees of $\calf$. By Lemma \ref{treeContracLem}, contracting these edges does not increase the PSD propagation time of $\calf$. Since the complete edges of $K_a \square T_{k,b}$ are not in any forcing tree of $\calf$, deleting these edges does not increase the PSD propagation time of $\calf$. Thus, if $G$ is obtained from $K_a \square T_{k,b}$ by contracting tree edges and/or deleting complete edges, then $\thp(G) \leq a +b$.
\end{proof}

It is shown in \cite{PSD} that if $T'$ and $T$ are trees with $T' \leq T$, then $\thp(T') \leq \thp(T)$ (i.e., the PSD throttling number is subtree monotone). This result can be extended to minors of trees as an immediate consequence of Theorem \ref{psdCharThm}.

\begin{cor}
If $T'$ and $T$ are trees with $T' \preceq T$, then $\thp(T') \leq \thp(T)$.
\end{cor}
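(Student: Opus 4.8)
The plan is to reduce the minor relation to two operations I can control separately: passing to a subtree and contracting a single edge. First I would record a structural fact about tree minors: if $T' \preceq T$ with both trees, then $T'$ is a contraction of some subtree of $T$. To see this, take a branch-set decomposition of $T'$ inside $T$ and let $T''$ be the union of the branch sets together with the one edge of $T$ joining each pair of adjacent branch sets (at most one such edge exists, since $T$ is a tree). Then $T''$ is a connected subgraph of the tree $T$, hence itself a tree, and contracting the edges internal to the branch sets turns $T''$ into $T'$. Since \cite{PSD} gives $\thp(T'') \le \thp(T)$ whenever $T'' \le T$, it suffices to prove that contracting edges cannot increase the PSD throttling number of a tree; inducting on the number of contracted edges, this reduces to a single contraction $T' = T/e$ (and contracting an edge of a simple tree again yields a simple tree, so the induction stays within trees).

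For the single contraction I would invoke Theorem \ref{psdCharThm}: writing $t = \thp(T) = a + b$, realize $T$ as the graph obtained from $H = K_a \square T_{k,b}$ by contracting tree edges and deleting complete edges. Because only tree edges are ever contracted, and tree edges never join distinct copies of $T_{k,b}$, each branch set of this representation lies in a single copy; thus every vertex of $T$ inherits a well-defined copy index, and the edge $e$ is of exactly one of two types. If $e$ is realized by a tree edge of $H$ (its endpoints sharing a copy), I simply append that tree edge to the list of contracted tree edges; then $T/e$ is again obtained from $H$ by contracting tree edges and deleting complete edges, so Theorem \ref{psdCharThm} immediately gives $\thp(T/e) \le a + b = t$. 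This case is in the spirit of Lemma \ref{treeContracLem}, which already says that contracting forcing-tree edges does not hurt PSD propagation.

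The main obstacle is the remaining case, in which $e$ is realized by a complete edge of $H$: its endpoints lie in distinct copies $i \ne j$ but over a common node $w$ of $T_{k,b}$. Contracting $e$ would amount to contracting a complete edge, which the characterization does not permit, so the representation must be rebuilt. The leverage here is that the budget $a + b$ does not involve the arity $k$, so $k$ is free. I would merge the parts of copies $i$ and $j$ meeting at $w$ into a single copy by enlarging the arity (replacing $k$ by a $k'$ large enough to host the children of the merged vertex inside one tree node), and then check that every surviving tree edge and complete edge of the old representation still corresponds to an edge of the new product with $a$ and $b$ unchanged. Verifying that this re-embedding goes through — equivalently, that identifying the two forcing trees that meet along $e$ does not disturb the white-component structure driving PSD forcing — is the delicate step. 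The clean way to phrase what must ultimately be proved is that the projected forcing set $\pi(B)$ satisfies $\ptp(T/e; \pi(B)) \le \ptp(T; B)$ for an optimal $B$; small cases such as contracting the middle edge of $P_4$ to obtain $P_3$ confirm that contraction never forces the projected process to use more time. Carrying out either the re-embedding or this direct propagation estimate is where the real work lies.
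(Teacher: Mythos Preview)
The paper does not actually prove this corollary; it simply asserts it as an immediate consequence of Theorem \ref{psdCharThm}. So there is no detailed argument to compare against, and your decomposition into subtree-then-contraction, with the subtree step handled by the result from \cite{PSD}, is a perfectly reasonable way to begin.

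The gap is exactly where you locate it. Your treatment of the case where $e$ corresponds to a complete edge of $K_a \square T_{k,b}$ does not go through: enlarging the arity $k$ lets you add more children within a single copy of $T_{k,b}$, but it gives you no mechanism for merging two different copies indexed by $i \neq j$ in the $K_a$ factor. Merging those copies would mean changing the $K_a$ coordinate, and there is no reason the resulting object is again of the form $K_{a'}\square T_{k',b}$ in a way that lets Theorem \ref{psdCharThm} be reapplied. This case is not a technicality you can wave away: for instance, with $T=P_4$ and the optimal set $B=\{1,4\}$, the middle edge is necessarily a complete edge in every representation coming from that $B$, since its endpoints turn blue at the same time and hence lie in distinct forcing trees.

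The good news is that the statement you isolate at the end, $\ptp(T/e;\pi(B)) \le \ptp(T;B)$, is exactly the right target, and for trees it is genuinely easy once you recall a basic fact about PSD forcing on trees: whenever a white vertex $v$ has a blue neighbor $u$, then $v$ is the \emph{only} white neighbor of $u$ in the white component containing $v$ (any other white neighbor of $u$ would have to reach $v$ through a path avoiding $u$, impossible in a tree). Hence every white vertex adjacent to a blue vertex is forced in the next round, which gives
\[
\ptp(T;B)=\max_{v\in V(T)}\dist_T(v,B).
\]
Contraction does not increase distances, so with $B'=\pi(B)$ you get $|B'|\le |B|$ and $\ptp(T';B')=\max_{v'}\dist_{T'}(v',B')\le \max_v \dist_T(v,B)=\ptp(T;B)$, finishing the contraction step in one line. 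This bypasses the tree-edge/complete-edge dichotomy entirely and is presumably the kind of argument the authors had in mind when they called the corollary immediate.
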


In Section \ref{zpFloorThrot}, Theorem \ref{psdCharThm} is used to quickly obtain a similar characterization for a variant of PSD throttling.
\end{section}

\begin{section}{Throttling the Minor Monotone Floor of PSD Zero Forcing} \label{zpFloorThrot}

This section considers throttling for a variant of PSD zero forcing that allows hopping in each component. Let $G$ be a graph with $B \subseteq V(G)$ colored blue and $V(G) \setminus B$ colored white. Let $W_1, W_2, \ldots, W_k$ be the sets of white vertices in each connected component of $G-B$. For each $1 \leq i \leq k$, let $A_i \subseteq B$ be the set of vertices that are considered ``active" with respect to $W_i$. The \emph{$\zpf$ color change rule} is that if $u \in A_i$, $w \in W_i$, and every neighbor of $u$ in $G[W_i \cup B] - w$ is blue, then $u$ can force $w$ to become blue. (Note that if $w$ is the only white neighbor of $u$ in $G[B \cup W_i]$, then $u \rightarrow w$ is a $\zp$ force. Otherwise, $u$ has no white neighbors in $G[B \cup W_i]$ and $u \rightarrow w$ by hopping.) After $u \rightarrow w$, $u$ is removed from $A_i$ and $w$ becomes active with respect to $W_i$. 

It is shown in \cite{Parameters} that the minor monotone floor of $\zp$ of a graph $G$ (denoted $\zpf(G)$) can be defined as the $R$ forcing parameter, $R(G)$, where $R$ is the $\zpf$ color change rule. This allows for the study of $\zpf$ propagation time and $\zpf$ throttling. Since every PSD zero forcing set $B$ of a graph $G$ is also a $\zpf$ forcing set of $G$ with $\ptzpf(G; B) \leq \ptp(G; B)$, $\thzpf(G) \leq \thp(G)$. In \cite[Corollary 3.6]{JCThrot}, it is shown that for a graph $G$ and subset $B \subseteq V(G)$, $\thzpf(G;B) = \min\{\thp(H;B)\}$ where $H$ ranges over all spanning supergraphs of $G$. This leads to an analogous fact for the $\zpf$ throttling number of a graph.

\begin{cor}\label{PSDFspansupsCor}
If $G$ is a graph, then $\thzpf(G) = \min\{\thp(H) \ | \ G \leq H \text{ and } |G| = |H|\}$.
\end{cor}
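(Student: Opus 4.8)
The plan is to derive this graph-level identity directly from the pointwise identity
\[
\thzpf(G;B) = \min\{\thp(H;B) : G \leq H,\ |G|=|H|\}
\]
quoted from \cite[Corollary 3.6]{JCThrot}, by unwinding the definition of the throttling number of a graph as a minimum over $B$ and then interchanging two finite minimizations. No new combinatorial content is needed; the whole argument is a formal manipulation of nested minima.

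First I would expand $\thzpf(G) = \min_{B \subseteq V(G)} \thzpf(G;B)$ and substitute the pointwise identity to obtain
\[
\thzpf(G) = \min_{B \subseteq V(G)}\ \min\{\thp(H;B) : G \leq H,\ |G|=|H|\}.
\]
The crucial observation is that every spanning supergraph $H$ of $G$ satisfies $V(H) = V(G)$, so the inner minimization ranges over exactly the subsets $B \subseteq V(H)$, while the outer minimization over $B \subseteq V(G)$ ranges over the same collection of sets. Since $G$ is finite, both the set of spanning supergraphs of $G$ and the power set of $V(G)$ are finite (and nonempty, as $V(H)$ is always a PSD forcing set), so the nested minimum is a minimum over the finite index set $\{(B,H) : B \subseteq V(G),\ G \leq H,\ |G|=|H|\}$, and the two minima may be interchanged freely.

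Interchanging them and applying the definition $\thp(H) = \min_{B \subseteq V(H)} \thp(H;B)$ gives
\[
\thzpf(G) = \min_{G \leq H,\, |G|=|H|}\ \min_{B \subseteq V(H)} \thp(H;B) = \min\{\thp(H) : G \leq H,\ |G|=|H|\},
\]
which is the desired equality. There is no substantial obstacle here; the only point requiring care is confirming that the two minimizations over $B$ are taken over identical index sets, and this follows immediately from the fact that spanning supergraphs preserve the vertex set.
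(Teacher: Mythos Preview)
Your proof is correct. Both you and the paper rely on the pointwise identity $\thzpf(G;B) = \min_H \thp(H;B)$ from \cite[Corollary~3.6]{JCThrot}, but you apply it more efficiently: you simply swap the two finite minima over $B$ and over $H$, which immediately yields both inequalities at once. The paper uses the pointwise identity only for the inequality $\min_H \thp(H) \leq \thzpf(G)$; for the reverse direction it starts from an optimal pair $(H',B')$ on the right-hand side and argues by hand that any set of PSD forces of $B'$ in $H'$ becomes a valid set of $\zpf$ forces of $B'$ in $G$ (edges in $E(H')\setminus E(G)$ turning into hops), whence $\thzpf(G) \leq \thp(H')$. That direct argument is essentially re-proving half of \cite[Corollary~3.6]{JCThrot} and is redundant once the pointwise identity is available. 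Your min-swap is cleaner and avoids this redundancy; the paper's version has the minor expository virtue of making the mechanism (hops replacing deleted edges) visible again at this point.
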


\begin{proof}
Choose a subset $B \subseteq V(G)$ and a set $\calf$ of $\zpf$ forces of $B$ such that $\ptzpf(G; \calf) = \ptzpf(G; B)$ and $\thzpf(G) = \thzpf(G;B)$. Then 
\beas
\min\{\thp(H) \ | \ G &\le& H \text{ and } |G| = |H|\} \leq \min\{\thp(H; B) \ | \ G \leq H \text{ and } |G| = |H|\}\\[.5 em]
 &=& \thzpf(G;B) = \thzpf(G).
\eeas 
Let $H'$ be a spanning supergraph of $G$ such that $\thp(H') \leq \thp(H)$ for any spanning supergraph $H$ of $G$. Suppose $B' \subseteq V(H')$ with $\thp(H') = \thp(H'; B')$. Now suppose $\calf'$ is a set of PSD forces of $B'$ such that $\ptp(H'; B') = \ptp(H'; \calf')$. The next step is to show that $\calf'$ is a set of $\zpf$ forces of $B'$ in $G$ with $\ptzpf(G; \calf') \leq \ptp(H'; \calf')$. Choose an edge $uw \in E(H') \setminus E(G)$ and suppose $(u \rightarrow w) \in \calf'$. In the component where $u \rightarrow w$, $w$ is the only white neighbor of $u$. So if the edge $uw$ is removed from $E(G)$, $u$ is allowed to force $w$ by a hop. If $(u \rightarrow w) \notin \calf'$, then removing $uw$ does not slow down the propagation time of $\calf'$. Note that removing edges from $H'$ may increase the number of components at each time step when the blue vertices are removed. However, due to hopping, every force in $\calf'$ is still a valid $\zpf$ force in $G$ and $\ptzpf(G; \calf') \leq \ptp(H'; \calf')$. Thus,
\beas
\thzpf(G) &\leq& \thzpf(G; B') \leq \thzpf(G; \calf') \leq \thp(H'; \calf')\\[.5 em]
&=& \thp(H') = \min\{\thp(H) \ | \ G \leq H \text{ and } |G| = |H|\}.\qedhere
\eeas 
\end{proof}

Theorem \ref{psdCharThm} and Corollary \ref{PSDFspansupsCor} can be used to characterize graphs $G$ with $\thzpf(G) \leq t$ for any positive integer $t$. This characterization is also in terms of specified minors of the Cartesian product of a tree and a complete graph.

\begin{thm}\label{psdFloorCharThm}
Suppose $G$ is a graph and $t$ is a fixed positive integer. Then $\thzpf(G) \leq t$ if and only if there exists positive integers $a,k$ and non-negative integer $b$ such that $a + b = t$ and $G$ can be obtained from $K_a \square T_{k,b}$ by contracting tree edges and/or deleting edges.
\end{thm}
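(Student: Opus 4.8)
The plan is to derive this characterization directly from the PSD throttling characterization in Theorem \ref{psdCharThm} together with the spanning-supergraph identity in Corollary \ref{PSDFspansupsCor}. The guiding principle is that $\thzpf$ is exactly the minimum of $\thp$ over spanning supergraphs, and that deleting an arbitrary edge decomposes into deleting a complete edge (permitted by Theorem \ref{psdCharThm}) and deleting a tree edge, where the latter is precisely what passing to a spanning subgraph accomplishes. Thus the only change between the two statements, replacing ``deleting complete edges'' by ``deleting edges,'' should correspond exactly to the replacement of $\thp$ by its spanning-supergraph minimum.

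For the forward direction I would start from $\thzpf(G) \leq t$ and invoke Corollary \ref{PSDFspansupsCor} to obtain a spanning supergraph $H$ of $G$ with $\thp(H) = \thzpf(G) \leq t$. Applying Theorem \ref{psdCharThm} to $H$ yields integers $a,k > 0$ and $b \geq 0$ with $a+b = t$ such that $H$ arises from $K_a \square T_{k,b}$ by contracting tree edges and deleting complete edges. Since $G$ is a spanning subgraph of $H$, we have $G = H - F$ for some $F \subseteq E(H)$. Composing the operations, $G$ is obtained from $K_a \square T_{k,b}$ by contracting the same tree edges and then deleting both the complete edges used to form $H$ and the edges of $F$; because arbitrary edge deletions are permitted in the present statement, this exhibits $G$ in the required form.

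For the converse I would take $G$ obtained from $K_a \square T_{k,b}$ by contracting a set $C$ of tree edges and deleting a set $D$ of edges, with $a+b=t$. I would partition $D = D_c \sqcup D_t$ into its complete edges and its tree edges, and let $H$ be the graph obtained from $K_a \square T_{k,b}$ by contracting $C$ and deleting only $D_c$. Then $H$ and $G$ have the same vertex set and $E(G) = E(H) \setminus D_t$, so $G$ is a spanning subgraph of $H$. By Theorem \ref{psdCharThm} we get $\thp(H) \leq a+b = t$, and then Corollary \ref{PSDFspansupsCor} gives $\thzpf(G) \leq \thp(H) \leq t$, as desired.

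Once the two cited results are in hand the argument is essentially bookkeeping, so the main obstacle is justifying that the tree/complete classification of edges is stable under the contractions in $C$, so that the decomposition $D = D_c \sqcup D_t$ and the auxiliary graph $H$ are legitimate. I expect this to follow from the product structure of $K_a \square T_{k,b}$: contracting a tree edge only identifies two vertices sharing a $K_a$-coordinate, so tree-edge images remain joins of vertices in a common copy of $T_{k,b}$ while complete-edge images remain joins across distinct copies, and the two classes never merge. Confirming this stability is the one point I would check carefully before declaring the decomposition valid.
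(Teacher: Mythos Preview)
Your proposal is correct and follows the same two-step approach as the paper: both directions go through Corollary~\ref{PSDFspansupsCor} and Theorem~\ref{psdCharThm}. The forward direction is identical to the paper's.

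For the converse, the paper uses a slightly cleaner choice of spanning supergraph that sidesteps the stability concern you flag at the end. Rather than partitioning $D$ into complete and tree edges and setting $H = (K_a \square T_{k,b})/C - D_c$, the paper simply takes $T' = (K_a \square T_{k,b})/C$ with no deletions at all. Since $T'$ is obtained from $K_a \square T_{k,b}$ by contracting tree edges only, Theorem~\ref{psdCharThm} already gives $\thp(T') \leq a+b$, and $G$ is obtained from $T'$ by deleting all of $D$, so $G$ is a spanning subgraph of $T'$ and Corollary~\ref{PSDFspansupsCor} finishes. This avoids any need to classify edges of the contracted graph as ``tree'' or ``complete.'' Your version works too---your stability claim is correct, since contracting tree edges only identifies vertices sharing a $K_a$-coordinate, so the image of a tree edge can never coincide with the image of a complete edge---but the partition is unnecessary.
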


\begin{proof}
Suppose $\thzpf(G) \leq t$. By Corollary \ref{PSDFspansupsCor}, there exists a spanning supergraph $H$ of $G$ such that $\thp(H) = \thzpf(G) \leq t$. Clearly $G$ can be obtained from $H$ by removing edges. By Theorem \ref{psdCharThm}, there exists positive integers $a,k$ and non-negative integer $b$ such that $a + b = t$ and $H$ can be obtained from $K_a \square T_{k,b}$ by contracting tree edges and/or deleting complete edges. Thus, $G$ can be obtained from $K_a \square T_{k,b}$ by contracting tree edges and/or deleting edges.

Let $T = K_a \square T_{k,b}$ for some positive integers $a,k$ and non-negative integer $b$. Suppose $D \subseteq E(T)$ and $C$ is a set of tree edges of $T$ such that $C \cap D = \emptyset$ and $G$ can be obtained from $T$ by contracting the edges in $C$ and deleting the edges in $D$. Let $T'$ be the graph obtained from $T$ by contracting the tree edges in $C$. By Theorem \ref{psdCharThm}, $\thp(T') \leq a + b$. Note that $G$ can be obtained from $T'$ by deleting the edges in $D$. By Corollary \ref{PSDFspansupsCor}, $\thzpf(G) \leq \thp(T') \leq a + b$.
\end{proof}

Although Theorems \ref{psdCharThm} and \ref{psdFloorCharThm} can be useful in describing graphs with low throttling numbers, it less helpful when considering graphs with large throttling numbers relative to the number of vertices. The next section characterizes graphs with high throttling numbers using families of forbidden subgraphs.

\end{section}

\begin{section}{Throttling as a Forbidden Subgraph Problem}\label{sectFSP}

 In this section, we consider graphs $G$ with standard and PSD throttling numbers at least $|V(G)| - k$ for some integer $0 \leq k \leq |V(G)|$. Following convention, we drop the subscript in the notation for standard throttling and propagation. Given an initial set $B \subseteq V(G)$ of blue vertices, recall that for standard or PSD zero forcing, $B^{[t]}$ denotes the set of blue vertices in $G$ at time $t$ and $B^{(t)}$ denotes set of vertices that turn blue at time step $t$ given a forcing process $F$. Notice that if the standard (or PSD) propagation time of $B$ in $G$ is $r$, then $B\up{i}$ is non-empty for each $0\leq i \leq r$ and $\bigcup_{i=0}^rB^{(i)}=V(G)$. Let $U^{(t)}$ be the set of blue vertices in $G$ that force the vertices in $B^{(t)}$ to become blue at time $t$ using a set of forces $\calf$. A set $B$ is a \emph{witness of $\throt(G) < n-k$} if $\throt(G;B) < n-k$.
 
 The first characterization of throttling numbers in terms of forbidden subgraphs that we are aware of appears in \cite{PSD}. 
 
 \begin{figure}[H] 
\label{n-1fig1}
\begin{center}
\scalebox{.55}{\includegraphics{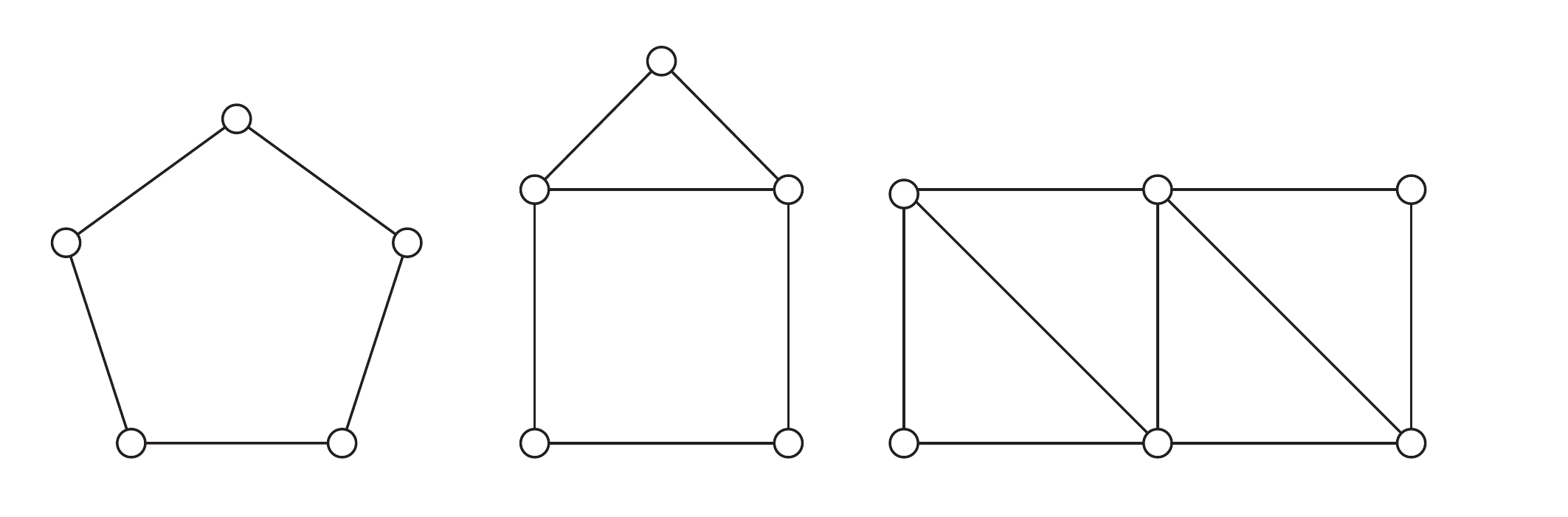}}
\caption{The $C_5$, house, and double diamond graphs are shown left to right.}
\end{center}
\end{figure}

\begin{thm}[\cite{PSD}]\label{th+n-1}
For a connected graph $G$, $\thp(G)\geq |V(G)|-1$ if and only if $G$ does not have an induced $\bar K_3$, $C_5$, house graph, or double diamond graph. Furthermore, $\thp(G)=|V(G)|-1$ if and only if $G$ does not have an induced $C_5$, house graph, or double diamond graph but $G$ has an induced $\bar K_2$. Finally, 
$\thp(G)=|V(G)|$ if and only if $G$ does not contain an induced $\bar K_2$ (that is, $G$ is complete).
\end{thm}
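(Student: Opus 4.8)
The plan is to derive all three equivalences from one bookkeeping identity. Fix a PSD forcing set $B$ of $G$ with a set of forces $\calf$ realizing $\ptp(G;B)=r$, and recall that $B^{(t)}$ is the set of vertices forced at time step $t$, so $\sum_{t=1}^{r}|B^{(t)}|=n-|B|$ with $n=|V(G)|$. Calling $\mathrm{ex}(B,\calf):=\sum_{t=1}^{r}(|B^{(t)}|-1)$ the \emph{excess} of the process, we obtain $\thp(G;B)=|B|+r=n-\mathrm{ex}(B,\calf)$. Consequently $\thp(G)\le n-k$ iff $G$ admits a process of excess at least $k$, and $\thp(G)\ge n-1$ iff every process has excess at most $1$. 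I would first dispose of the extremes: for $K_n$ any proper blue subset leaves one connected white part in which no blue vertex has a unique white neighbor until a single white vertex remains, so the only forcing sets have $|B|\ge n-1$ and $\thp(K_n)=n$; and if $G$ has an induced $\bar K_2$ on $\{w_1,w_2\}$ then $B=V(G)\setminus\{w_1,w_2\}$ forces $w_1,w_2$ simultaneously (two singleton components, each with a blue neighbor by connectivity), an excess-$1$ process giving $\thp(G)\le n-1$. This already yields part three: $\thp(G)=n$ iff $G$ has no induced $\bar K_2$, i.e.\ iff $G$ is complete.

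For the backward direction of part one I would produce, for each forbidden graph $H$, a witness of excess $2$ and transport it into any $G$ containing $H$ as an induced subgraph. The transport is the key lemma: if $H$ is an induced subgraph of $G$ and $S\subseteq V(H)$, then coloring $V(G)\setminus S$ blue produces in $G-(V(G)\setminus S)=G[S]=H[S]$ exactly the components $W_1,\dots,W_k$ of $H[S]$, and any force $u\to w$ with $w\in S$ valid inside $H$ stays valid in $G$, since the white set always lies in $V(H)$ and $H$ is induced, so $u$ has the same white neighbors in $G[(V(G)\setminus S)\cup W_i]$ as in $H$ while extra blue vertices of $G$ only add blue neighbors, which never block a force. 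Thus $\ptp(G;V(G)\setminus S)$ is at most the $H$-propagation time and the excess is preserved. For $H=\bar K_3$ I instead take $S$ to be an independent triple, so $G[S]$ is three singletons each forced in one step by a blue neighbor supplied by connectivity, giving excess $3-1=2$; for $C_5$ take $S$ to be one vertex together with the two endpoints of the non-incident edge (forced by their outside neighbors $B=\{c_1,c_3\}$), and for the house take $S$ to be the apex together with the floor edge, with analogous (slightly larger) explicit $S$ for the double diamond. Each is checked by hand to have excess $2$, so the lemma gives $\thp(G)\le n-2$ whenever $G$ contains any of the four graphs induced.

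The forward direction is the substantial part: assuming $G$ is connected with a process of excess at least $2$, I must locate one of the four graphs induced. The engine is the observation that two vertices forced at the same time step in different components of $G-B^{[t-1]}$ are non-adjacent, since an edge between them would place them in a common component. Hence if any single time step forces vertices lying in at least three distinct components, choosing one from each yields an induced $\bar K_3$, and we are done. It therefore suffices to treat processes with $\alpha(G)\le 2$, where the excess must be concentrated either in one step that forces two vertices inside a single white component or across two such steps. I would analyze the local constellation around the forcing trees $U^{(t)}\to B^{(t)}$: a pair of blue forcers $u_1,u_2$ pushing the two vertices $v_1,v_2$ of one component (with $v_1v_2\in E(G)$, as they cannot be separated) together with the remaining forced vertex, and then split into cases on the adjacencies among $u_1,u_2$ and the extra vertex. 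Here Proposition~\ref{betweenEdgeProp} controls which non-forcing-tree edges may be present, and $\alpha(G)\le2$ forbids the independent triples that would otherwise arise; tracking these constraints should force the induced subgraph on the relevant five (or six) vertices to be exactly a $C_5$, house, or double diamond.

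The main obstacle is precisely this last case analysis. Because throttling is not subgraph-monotone I cannot delete vertices freely; I must argue within the fixed process that the forced vertices, their forcers, and the present/absent edges among them realize one of the three sporadic graphs \emph{on the nose} as an induced subgraph, so every edge and every non-edge matters. The delicate points are ruling out configurations that would produce an unlisted obstruction and verifying that $\alpha(G)\le2$ together with excess at least $2$ leaves no possibility but $C_5$, house, or double diamond. Once part one is established, parts two and three follow by elimination: part one gives $\thp(G)\ge n-1$ exactly off the forbidden family, the computation above gives $\thp(G)=n$ exactly for complete graphs (no induced $\bar K_2$), and therefore $\thp(G)=n-1$ exactly for the non-complete graphs of part one, namely those with an induced $\bar K_2$ but none of $\bar K_3$, $C_5$, house, or double diamond.
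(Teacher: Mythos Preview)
This theorem is not proved in the paper; it is quoted from \cite{PSD}, so there is no in-paper argument to compare against. Your framework, however, is precisely the machinery the paper builds around it: your ``excess'' identity $\thp(G;B)=n-\mathrm{ex}(B,\calf)$ is Lemma~\ref{savings}, and your transport lemma is the content of Proposition~\ref{forbidden}. The easy pieces---$\thp(K_n)=n$, the $\bar K_2$ witness giving $\thp\le n-1$, and the transported excess-$2$ witnesses for the four forbidden graphs---are correct (your $C_5$ labelling is garbled: you want $S=\{c_2,c_4,c_5\}$ with forcers $c_1,c_3$, not $c_1,c_3\in S$).

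The forward direction is where the content lies, and your sketch has a structural gap beyond the acknowledged incompleteness of the case analysis. Assuming $\alpha(G)\le 2$ forces $G-B^{[t-1]}$ to have at most two components, but it does \emph{not} force $|B^{(t)}|\le 2$: several vertices of a single white component can be forced simultaneously by distinct blue vertices, so the excess need not be ``concentrated either in one step that forces two vertices inside a single white component or across two such steps.'' You must either show directly that a step forcing three or more vertices in one component already produces one of the sporadic graphs, or first pass to a bounded-size induced witness via Theorem~\ref{finite} (at most $8$ vertices here) and run the case analysis on that. Finally, Proposition~\ref{betweenEdgeProp} is not the right tool: it describes how non-tree edges embed into the extension graph $\mathcal{E}_+(G;B;\calf)$ and places no constraint on which edges of $G$ itself may occur among the forcers and forced vertices.
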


By Theorem \ref{th+n-1} it is clear that the sets of forbidden subgraphs that characterize $\thp(G)\geq |V(G)|$ and $\thp(G)\geq |V(G)|-1$ are finite. We derive a similar result to Theorem \ref{th+n-1} for standard zero forcing.

 \begin{figure}[H] 
\label{highpsdfig}
\begin{center}
\scalebox{.55}{\includegraphics{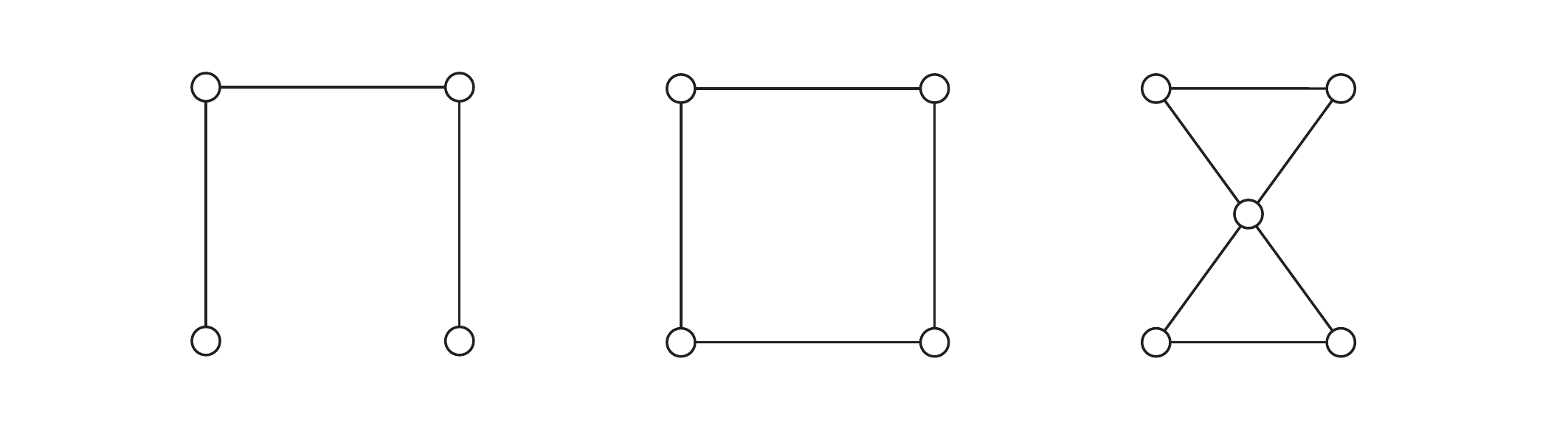}}
\caption{The $P_4$, $C_4$, and bowtie graphs are show left to right.}
\end{center}
\end{figure}

\begin{thm}\label{th=n}
For a connected graph $G$, $\throt(G)=|V(G)|$  if and only if $G$ does not contain an induced $P_4,C_4,$ or bowtie graph.
\end{thm}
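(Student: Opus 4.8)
The plan is to reduce the theorem to a purely structural statement about a local configuration I will call a \emph{good pair}, and then to pin down exactly when such a configuration exists. Throughout write $n=|V(G)|$. First I would record the elementary bound $\throt(G)\le n$, witnessed by $B=V(G)$. Since for standard zero forcing the sets $B^{(t)}$ are determined by $B$, any forcing set $B$ with $\pt(G;B)=r$ satisfies $n-|B|=\sum_{i=1}^{r}|B^{(i)}|\ge r$, so $\throt(G;B)=|B|+r\le n$, with equality if and only if exactly one vertex is forced in each time step. Hence $\throt(G)<n$ if and only if some forcing set admits a time step in which two distinct white vertices $w_1,w_2$ are forced simultaneously, say by $u_1,u_2$. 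Calling $(w_1,w_2)$ a \emph{good pair} when $N(w_1)\setminus N[w_2]\neq\emptyset$ and $N(w_2)\setminus N[w_1]\neq\emptyset$, I would prove the first key lemma: $\throt(G)<n$ if and only if $G$ has a good pair. The backward direction takes $B=V(G)\setminus\{w_1,w_2\}$ and checks that the promised neighbors force both $w_1$ and $w_2$ at the first step; the forward direction reads $u_1\in N(w_1)\setminus N[w_2]$ and $u_2\in N(w_2)\setminus N[w_1]$ directly off the double force, since the defining unique-white-neighbor condition forces $u_1\not\sim w_2$ and $u_2\not\sim w_1$.

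With the throttling content packaged this way, the theorem becomes the graph-theoretic claim that a connected $G$ has a good pair if and only if it contains an induced $P_4$, $C_4$, or bowtie. For the easy direction I would exhibit an explicit good pair inside each forbidden graph: the two degree-one vertices of a $P_4$, an edge of a $C_4$, and one non-central vertex from each triangle of a bowtie, in each case using inducedness to guarantee that the required non-adjacencies genuinely hold in $G$. For the reverse direction, given a good pair $(w_1,w_2)$ with witnesses $u_1,u_2$, the four vertices are distinct and carry the edges $u_1w_1,u_2w_2$ together with the non-edges $u_1w_2,u_2w_1$. A four-case split on the two undetermined pairs $u_1u_2$ and $w_1w_2$ then produces an induced $C_4$ (both present), an induced $P_4$ (exactly one present), or an induced $2K_2$ (neither present).

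The main obstacle is precisely this last case: an induced $2K_2$ is not on the forbidden list, so I must manufacture a bowtie, and this is exactly where connectivity becomes essential. A graph with no induced $P_4$ or $C_4$ is trivially perfect, so a connected such graph has a universal vertex $v$. I would verify that $v\notin\{u_1,w_1,u_2,w_2\}$ (each possible equality contradicts one of the four established (non)adjacencies) and that $v$, being adjacent to all four, completes the two triangles $\{v,u_1,w_1\}$ and $\{v,u_2,w_2\}$ while the induced $2K_2$ supplies the four absent cross-edges; this is an induced bowtie. Phrased contrapositively, a connected $\{P_4,C_4,\text{bowtie}\}$-free graph admits no good pair, hence $\throt(G)=n$, and combining the two lemmas yields the theorem. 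The only external fact I would either cite or dispatch in a line is the standard statement that a connected $\{P_4,C_4\}$-free graph possesses a universal vertex.

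I expect the good-pair reduction to be routine once the observation that $\throt(G;B)=n$ is equivalent to forcing one vertex per step is in place, and the hard part to be organizing the reverse structural direction so that the $2K_2$ case is cleanly isolated and resolved through the universal vertex; if one prefers to avoid naming trivial perfection, the same conclusion can be reached by taking a shortest path joining the two edges of the $2K_2$ and arguing that any such path of length exceeding one already exhibits an induced $P_4$.
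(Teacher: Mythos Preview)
Your proposal is correct, and the reduction to what you call a good pair, together with the four-way case split on $\{u_1u_2,\,w_1w_2\}$, mirrors the paper's argument closely; the paper also isolates the simultaneous double force and reads off the induced $P_4$ or $C_4$ from the same adjacency data. The one substantive difference is how the $2K_2$ case is discharged. The paper stays self-contained: it takes a shortest path between the two forced vertices, observes that length $1$ or length $\ge 3$ immediately yield an induced $P_4$, and in the length-$2$ case checks the adjacencies of the midpoint $z$ to the two forcing vertices to produce either an induced $P_4$ or the bowtie with center $z$. Your primary route instead invokes the structural fact that a connected $\{P_4,C_4\}$-free graph has a universal vertex, which then serves as the bowtie's center; this is slicker once that lemma is in hand, at the cost of an external citation. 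Your closing remark that the shortest-path argument would do the same job is morally right, but as stated it is too brief: a shortest path of length exactly $2$ between the two $2K_2$-components does not by itself give an induced $P_4$, and one still has to perform the paper's adjacency check on the midpoint to land on the bowtie.
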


\begin{proof}Let $G$ be a graph on $n$ vertices.
Notice that $\thr G\leq n$ for all $G$. Suppose that $\thr G<n$. This implies that there exists a time $t$ such that $|B^{(t)}|\geq 2$. Let $u,v\in B^{(t)}$ and choose $x,y\in U^{(t)}$ such that $x\rightarrow u$ and $y\rightarrow v$. By the standard color change rule, we have that $xu,yv\in E(G)$ and $xv,yu\notin E(G)$. There are two cases, either $xy \in E(G)$ or $xy\notin E(G)$. 

\textbf{Case 1:} Assume that $xy\in E(G)$. If $uv\notin E(G)$, then $uxyv$ induces a $P_4$. If $uv\in E(G)$, then $uxyv$ induces a $C_4$.

\textbf{Case 2:} Assume that $xy\notin E(G)$. Since $G$ is connected there exists a shortest path $P$ from $u$ to $v$. Notice that if $P=uv$, then $xuvy$ induces a $P_4$. If $P$ contains at least $4$ vertices, then $P$ induces a $P_4$. Therefore, $P=vzu$ for some vertex $z$. Finally, this implies that $xz,yz\in E(G)$, otherwise there is an induced $P_4$. Now, $xuzvy$ induces a bowtie graph.

In all cases, we have determined that if $\thr G<n$, then $G$ contains a $P_4,C_4,$ or bowtie graph.

To prove the converse, suppose that $G$ has an induced $P_4,C_4,$ or bowtie graph. In any case, there exists a  matching with edges $xy,uv$ such that $xv,uy\notin E(G)$. Let $B=V(G)\setminus \{y,v\}$. Now $x$ can force $y$ and $u$ can force $v$ in the first time step of the zero forcing process. Since $y$ and $v$ are the only white vertices in $G$ at time $0$, we can conclude that 
\[\throt(G)\leq |B|+\pt(G;B)=n-1.\qedhere\] 
\end{proof}

These theorems suggest that throttling for standard and PSD zero forcing can be treated as a forbidden induced subgraph problem. Proposition \ref{forbidden} confirms this suspicion.

\begin{prop}\label{forbidden}
Let $k$ be a constant. The set of graphs $G$ such that  $\throt(G)\geq |V(G)|-k$  and $|V(G)|\geq k$ is characterized by a family of forbidden induced subgraphs. Similarly, the set of graphs $G$ such that   $\thp(G)\geq |V(G)|-k$ and $|V(G)|\geq k$ is characterized by a family of forbidden induced subgraphs.
\end{prop}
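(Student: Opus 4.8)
The plan is to show that the property in question is \emph{hereditary} (closed under taking induced subgraphs) and then invoke the standard fact that a class of graphs is characterized by a family of forbidden induced subgraphs precisely when it is hereditary. It is convenient to phrase heredity through the quantity $f(G)=|V(G)|-\throt(G)$ (and $f_+(G)=|V(G)|-\thp(G)$ in the PSD case): the class $\{G:\throt(G)\ge |V(G)|-k\}$ is exactly $\{G:f(G)\le k\}$, so it suffices to prove that $f$ is monotone under induced subgraphs, i.e.\ $f(H)\le f(G)$ whenever $H$ is an induced subgraph of $G$. Equivalently, I would prove the single inequality
\[
\throt(G)\le \throt(H)+\big(|V(G)|-|V(H)|\big)\qquad\text{whenever } H \text{ is an induced subgraph of } G.
\]

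The key lemma is this inequality, and the idea is to promote an optimal throttling set of $H$ to a throttling set of $G$ by blueing the extra vertices. First I would fix $B_H\subseteq V(H)$ with $\throt(H;B_H)=\throt(H)$ and set $B_G=B_H\cup(V(G)\setminus V(H))$, so that $|B_G|=|B_H|+(|V(G)|-|V(H)|)$ and the white vertices of $G$ under $B_G$ are exactly the white vertices of $H$ under $B_H$. The crucial observation is that because $H$ is an induced subgraph and every vertex of $V(G)\setminus V(H)$ is blue, each $u\in V(H)$ has the same white neighbors in $G$ (under $B_G$) as in $H$ (under $B_H$). Consequently every force performed in the process on $H$ is a valid force in $G$ at the same time step, so the forcing process of $B_H$ on $H$ can be copied verbatim on $G$; since $V(G)\setminus V(H)$ is already blue, $B_G$ is a zero forcing set of $G$ with $\pt(G;B_G)\le \pt(H;B_H)$. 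Combining gives $\throt(G)\le \throt(G;B_G)\le \throt(H;B_H)+(|V(G)|-|V(H)|)$, as desired.

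For the PSD statement the same construction works, and the main point requiring care---the chief obstacle of the argument---is that the PSD color change rule is sensitive to the component structure of the white vertices. Here I would observe that at every time step the blue set of $G$ is $B_H^{[t]}\cup(V(G)\setminus V(H))$, so the white vertices coincide with those of $H$ at time $t$, and since $H$ is induced the edges among white vertices are identical in $G$ and $H$. Hence $G$ minus its blue set and $H$ minus its blue set have exactly the same components $W_1,\dots,W_k$ at every step, and for $u\in V(H)$ the neighbors of $u$ inside each $W_i$ agree in $G$ and $H$. Therefore each PSD force of the process on $H$ remains a valid PSD force in $G$, giving $\ptp(G;B_G)\le \ptp(H;B_H)$ and the PSD analogue $\thp(G)\le \thp(H)+(|V(G)|-|V(H)|)$; thus $f_+$ is monotone under induced subgraphs as well.

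Finally I would assemble the characterization. Monotonicity of $f$ (resp.\ $f_+$) shows that $\{G:\throt(G)\ge |V(G)|-k\}$ (resp.\ its PSD version) is hereditary, so I take the forbidden family to be $\mathcal F=\{H:|V(H)|\ge k \text{ and } \throt(H)<|V(H)|-k\}$: if $G$ with $|V(G)|\ge k$ contains some induced $H\in\mathcal F$, then the inequality forces $\throt(G)<|V(G)|-k$, while if $G$ contains no induced member of $\mathcal F$ then in particular $G$ itself is not in $\mathcal F$, so $\throt(G)\ge |V(G)|-k$. The hypothesis $|V(G)|\ge k$ is exactly what guarantees that a graph violating the bound lies in $\mathcal F$ and hence is detected as an induced subgraph of itself. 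The identical argument with $\thp$ and $\mathcal F_+=\{H:|V(H)|\ge k,\ \thp(H)<|V(H)|-k\}$ handles the PSD case. One may instead pass to the minimal elements of $\mathcal F$ to obtain an antichain, but any such family suffices for the statement.
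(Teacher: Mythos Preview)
Your proposal is correct and follows essentially the same approach as the paper: both arguments show that if $H$ is an induced subgraph of $G$, then adjoining $V(G)\setminus V(H)$ to an optimal zero forcing set of $H$ yields a zero forcing set of $G$ whose propagation does not slow down, giving $\throt(G)\le\throt(H)+|V(G)|-|V(H)|$ (the paper writes this with the roles of $G$ and $H$ reversed). Your treatment is more explicit than the paper's---you spell out why the PSD component structure is preserved and you write down the forbidden family $\mathcal F$, whereas the paper simply notes that ``the proof for PSD throttling is the same''---but the underlying idea is identical.
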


\begin{proof}
 Suppose that $\thr G<|V(G)|-k$ and let $H$ be any graph such that $G$ is an induced subgraph of $H$ with the injection $\phi:V(G)\to V(H)$.
 Let $B\subseteq V(G)$ be a zero forcing set that realizes $\throt(G; B) = \thr G < V(G)-k$ and let $W = V(G) \setminus B$. 
 Then $B'=V(H)\setminus \phi(W)$ is a zero forcing set of $H$.  
 This follows from the fact that if $v\rightarrow u$ is possible in $G$ given $B$, then $\phi(v)\rightarrow \phi(u)$ is possible in $H$ given $B'$.
 In particular, \[\throt(H)\leq |B'|+\pt(H;B')=|V(H)\setminus \phi(V(G))|+|B|+\pt(G;B)<|V(H)|-k.\] Therefore, $B'$ is a zero forcing set of $H$ that demonstrates that $\throt (H)<V(H)-k$.  The proof for PSD throttling is the same.
\end{proof}
Let $\mathcal G_k$ be a set of forbidden graphs that characterizes graphs $G$ with $\throt(G)\geq |V(G)|-k$.
A natural question to ask given Proposition \ref{forbidden} is how large $\mathcal G_k$ must be to characterize graphs $G$ with $\throt(G)\geq |V(G)|-k$.  In order to show that $\mathcal G_k$ can be finite, we introduce the idea of ``savings". Throttling is an optimization between how many vertices are chosen in the initial zero forcing set and its propagation time. Intuitively, we want to force multiple vertices in a single time step to optimize throttling. In these cases, we ``save'' ourselves from choosing vertices in the initial zero forcing set by efficiently forcing vertices during the zero forcing process. To capture this idea, we consider the quantity $|B^{(t)}|-1$, which represents how much we ``save" at time $t$. This quantity is the number of efficiently forced vertices at time $t$ at the cost of waiting a time step. The following lemma states that in order to reduce the throttling number, we must efficiently force vertices.

\begin{lem}\label{savings}
Let $G$ be a graph and suppose $R$ is either the standard or PSD color change rule. Then, $\throt_R(G) < |V(G)|-k$ if and only if there exists an $R$ forcing set $B \subseteq V(G)$ such that 
 \[\sum_{i=1}^{\pt_R(G;B)} |B^{(i)}|-1  \geq k+1.\] 
\end{lem}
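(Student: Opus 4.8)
The plan is to reduce the biconditional to a single arithmetic identity that relates the ``savings'' sum to the throttling number of a fixed forcing set $B$. Write $n = |V(G)|$ and, for a given $R$ forcing set $B$, set $r = \pt_R(G;B)$. As noted in Section \ref{intro}, for the standard and PSD color change rules the sets $B^{(0)}, B^{(1)}, \ldots, B^{(r)}$ are uniquely determined by $B$ (they do not depend on the choice of $\calf$), so the quantity $\sum_{i=1}^{r}(|B^{(i)}|-1)$ is well defined. Moreover these sets partition $V(G)$, so that $\sum_{i=0}^{r}|B^{(i)}| = n$. This is the only structural fact I would need, and it is exactly why the lemma is stated for these two rules.

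The key step is to record the identity
\[
\sum_{i=1}^{r}\bigl(|B^{(i)}|-1\bigr) = \Bigl(\sum_{i=1}^{r}|B^{(i)}|\Bigr) - r = \bigl(n - |B^{(0)}|\bigr) - r = n - \bigl(|B| + \pt_R(G;B)\bigr) = n - \throt_R(G;B),
\]
using $|B^{(0)}| = |B|$ and the definition $\throt_R(G;B) = |B| + \pt_R(G;B)$. Thus the savings sum associated to $B$ is precisely $n - \throt_R(G;B)$.

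With this identity in hand, both directions follow directly. For the forward direction, I would take $\throt_R(G) < n - k$, choose $B$ attaining $\throt_R(G;B) = \throt_R(G)$, and conclude $\sum_{i=1}^{r}(|B^{(i)}|-1) = n - \throt_R(G;B) > k$; since the left-hand side is an integer, it is at least $k+1$. For the converse, given some $B$ with $\sum_{i=1}^{r}(|B^{(i)}|-1) \geq k+1$, the identity yields $\throt_R(G;B) = n - \sum_{i=1}^{r}(|B^{(i)}|-1) \leq n - (k+1) < n-k$, so $\throt_R(G) \leq \throt_R(G;B) < n-k$.

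There is no substantive obstacle here; the statement is essentially a bookkeeping consequence of the partition property. The only two points needing care are that $B^{(i)}$ is genuinely a function of $B$ alone (which holds for standard and PSD forcing by the uniqueness remark in Section \ref{intro}, so that the savings sum is well defined) and the integrality step that upgrades the strict inequality $>k$ to the inequality $\geq k+1$.
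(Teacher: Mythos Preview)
Your proof is correct and follows essentially the same approach as the paper: both arguments rest on the partition $V(G)=\bigcup_{i=0}^{r}B^{(i)}$ and the resulting arithmetic relation between $\sum_{i=1}^{r}(|B^{(i)}|-1)$ and $|B|+\pt_R(G;B)$. You package this as a single identity up front, whereas the paper carries out the equivalent chain of inequalities separately in each direction, but the content is the same.
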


\begin{proof}
Let $B$ be a standard zero forcing set of $G$ with \[\sum_{i=1}^{\pt(G;B)} |B^{(i)}|-1  \geq k+1.\] This implies that 
\beas
    |V(G)\setminus B|-\pt(G;B)&\geq& k+1\\[.5 em]
    |V(G)|-|B|-\pt(G;B)&\geq& k+1\\[.5 em]
    |V(G)|-k-1&\geq& |B|+\pt(G;B)\\[.5 em]
    |V(G)|-k&>& \throt(G).
\eeas

To prove the converse, assume that $|V(G)|-k>\throt(G)$ and let $B$ be a zero forcing set that realizes this inequality. In particular, suppose that
\[|V(G)|-k-1\geq |B|+\pt(G;B).\]
This implies that 
\[|V(G)\setminus B|-\pt(G;B)\geq k+1.\]
 Since $B$ is a zero forcing set, we can partition $V(G)\setminus B$ into $B^{(i)}$ for $1\leq i \leq \pt(G;B)$. Using this partition, we can count the elements in $V(G)\setminus B$ to obtain 
 \[\sum_{i=1}^{\pt(G;B)} |B^{(i)}|-1  \geq k+1.\]
 The proof for PSD zero forcing is exactly the same.
\end{proof}

Another important observation is that we can always choose a zero forcing set $B \subseteq V(G)$ such that $|B\up{i}| - 1 > 0$ at each time step $i$. In particular, suppose that $B$ is a zero forcing set of $G$ such that $\throt(G)\leq |B| + \pt(G; B) < |V(G)|-k$, and let \[A=\bigcup_{|B^{(i)}|=1} B^{(i)}.\] Then $B \cup A$ satisfies $\throt(G)\leq  |B \cup A| + \pt(G; B \cup A) < |V(G)|-k$.

\begin{defn}
We say a zero forcing set $B\subseteq V(G)$ is a \emph{standard witness for $\throt(G)< |V(G)|-k$}, if $|B\up i|-1>0$ for each time step $i$ and $|B| + \pt(G; B) < |V(G)|-k$. The same notion holds for PSD zero forcing. 
\end{defn}

\begin{ex}
Consider the graph $G$ given in Figure \ref{fig:standardwitness}. The initial zero forcing set (in blue) on the left has $|B\up 0|=3$, $|B\up 1|=2$, $|B\up 2|=1$, $|B\up 3|=1$, and $|B\up 4|=2$. Therefore, $B$ is not a standard witness for $\throt(G)< |V(G)|-1$. However,  $B'=B\up 0\cup B\up 2\cup B\up 3$, depicted (in blue) on the right of Figure \ref{fig:standardwitness}, is a standard witness for $\throt(G)<|V(G)|-1$.
\end{ex}

\begin{figure}[H] 

\begin{center}
\scalebox{.7}{\includegraphics{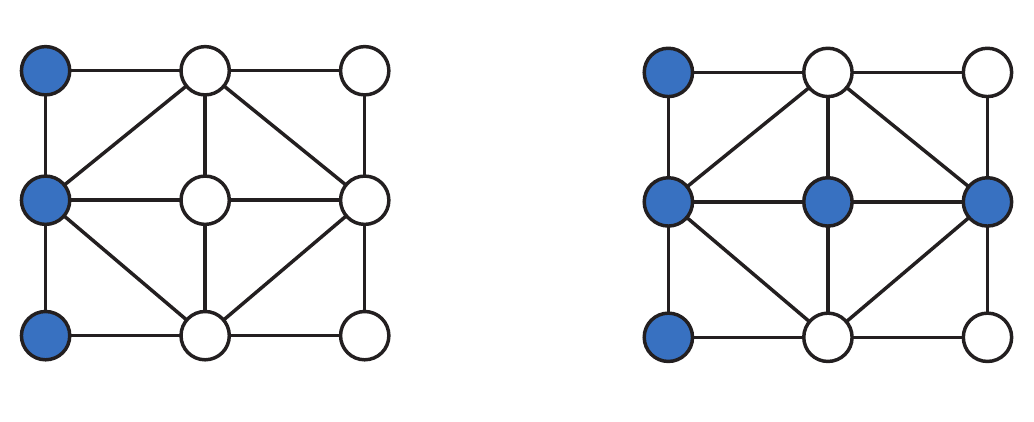}}
\caption{A graph $G$ with a witness and a standard witness for  $\throt(G) < |V(G)| - 1$ are shown on the left and right, respectively.}\label{fig:standardwitness}
\end{center}
\end{figure}

\vspace{-.2 in}

With these ideas, we can qualify what high throttling numbers mean in terms of the structure of a graph and how a zero forcing set behaves on it. In particular, Theorem \ref{finite} shows that there are finitely many graph structures that permit zero forcing (PSD zero forcing) sets to behave efficiently in their forcing behavior. Forbidding these structures ensure high throttling number.

\begin{thm}\label{finite}
Let $k$ be a non-negative integer and suppose $R$ is either the standard or PSD color change rule. The set of graphs $G$ such that $\throt_R(G)\geq |V(G)|-k$ and $|V(G)|\geq k$ is characterized by a finite family of forbidden induced subgraphs. 
\end{thm}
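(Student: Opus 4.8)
The plan is to combine Proposition \ref{forbidden} with a size bound on the minimal obstructions. Proposition \ref{forbidden} shows that the property $\throt_R(G) < |V(G)| - k$ is preserved under passing to induced supergraphs, so the complementary property $\throt_R(G) \ge |V(G)| - k$ is hereditary and is therefore characterized by the family $\mathcal F$ of \emph{minimal bad graphs}: those $G$ with $\throt_R(G) < |V(G)| - k$ all of whose proper induced subgraphs satisfy $\throt_R \ge |V| - k$. Because any bad graph has $\throt_R \ge 1$, a bad graph must satisfy $|V(G)| \ge k+2$; hence every member of $\mathcal F$ automatically meets the size hypothesis, and it suffices to prove that the graphs in $\mathcal F$ have boundedly many vertices. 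There are only finitely many graphs on at most $N(k)$ vertices, so such a bound makes $\mathcal F$ finite, and the same argument applies verbatim to $R$ being either the standard or PSD color change rule.

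Next I would fix a minimal bad graph $G$ and, using the remark preceding the definition of a standard witness, choose a standard witness $B$ (so $|B\up i| \ge 2$ at every time step) together with a set of $R$-forces realizing $\pt_R(G;B)$. By the computation in Lemma \ref{savings}, the savings satisfy $\sum_i (|B\up i| - 1) = |V(G)| - |B| - \pt_R(G;B) \ge k+1$. The first key step is a leaf-deletion argument. If $w$ is a leaf of some forcing tree (a vertex forcing nothing, so $w \notin B$), then deleting $w$ keeps all remaining forces valid and the same truncated forces still color everything, so $\pt_R(G-w;B) \le \pt_R(G;B)$ and $B$ witnesses savings at least $(|V(G)|-1) - |B| - \pt_R(G;B) \ge k$ in $G-w$. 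Consequently, if the savings of $B$ were at least $k+2$, deleting such a leaf would produce a proper bad induced subgraph, contradicting minimality. Thus the savings of $B$ equal $k+1$ exactly. Since every term $|B\up i| - 1$ is at least $1$, this forces $\pt_R(G;B) \le k+1$, and then $\sum_i |B\up i| = (k+1) + \pt_R(G;B) \le 2k+2$; that is, $G$ has at most $2k+2$ forced (non-initial) vertices.

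I would then bound $|B|$ by the same deletion principle applied to initial vertices. If a root $b \in B$ forces nothing, deleting the blue vertex $b$ leaves every other force valid (the white components of $G - B\upc{t-1}$ are unchanged, since $b$ is already blue) and does not increase the propagation time, so $B \setminus \{b\}$ witnesses badness of the proper induced subgraph $G-b$, again contradicting minimality. Hence each of the $|B|$ forcing trees contains at least one forced vertex; as the trees are vertex-disjoint and there are at most $2k+2$ forced vertices in total, we get $|B| \le 2k+2$. Combining the two bounds yields $|V(G)| = |B| + \sum_i |B\up i| \le 4k+4 =: N(k)$, so $\mathcal F$ consists of graphs on at most $4k+4$ vertices and is finite. (Note that these deletions may disconnect $G$, which is harmless because Theorem \ref{finite} places no connectivity hypothesis; indeed for $k=0$ the minimal obstruction is $2K_2$ rather than the connected obstructions of Theorem \ref{th=n}.)

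The arithmetic with Lemma \ref{savings} and the disjointness counting are routine. The step I expect to require the most care is verifying, uniformly for the standard and PSD rules, that deleting a leaf or a childless root preserves validity of all surviving forces and does not increase the propagation time, so that the reduced graph is genuinely bad. The PSD case is the delicate one: one must check that removing a vertex neither adds white neighbors nor merges two previously separate white components, since this is exactly what guarantees that each remaining force still sees its forced vertex as the unique white neighbor within the relevant component $G[B \cup W_i]$.
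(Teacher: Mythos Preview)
Your approach is correct and reaches the same bound $4k+4$, but it is genuinely different from the paper's. The paper does not argue via minimal obstructions; instead, from an \emph{arbitrary} bad graph $G$ it directly extracts a small bad induced subgraph. Concretely, given a standard witness $B$, the paper lets $r$ be the first time at which the partial savings $\sum_{i\le r}(|B^{(i)}|-1)$ reach $k+1$, trims $B^{(r)}$ to a subset $\hat B^{(r)}$ so the sum is exactly $k+1$, and sets $H=G\bigl[\bigcup_{i=1}^r U^{(i)}\cup \hat B^{(i)}\bigr]$. It then checks that $\hat B=\bigcup_i(U^{(i)}\setminus\bigcup_{j<i}\hat B^{(j)})$ is an $R$-forcing set of $H$ reproducing the same savings, and bounds $|V(H)|\le 2\sum_i|\hat B^{(i)}|=2(k+1+r)\le 4k+4$ using $|U^{(i)}|\le|\hat B^{(i)}|$ and $r\le k+1$. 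Thus the paper's forbidden family is simply ``all bad graphs on at most $4k+4$ vertices,'' obtained constructively.

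Your route replaces this truncation by two deletion lemmas applied to a minimal bad graph: first delete a forced leaf to pin the savings at exactly $k+1$ (hence $\pt_R\le k+1$ and at most $2k+2$ forced vertices), then delete a childless root to force every forcing tree to be nontrivial (hence $|B|\le 2k+2$). The advantage of the paper's argument is that it never needs to verify that vertex deletion preserves the validity of PSD forces---it only restricts to an induced subgraph and re-runs the process there, which is exactly the content of Proposition~\ref{forbidden}. Your approach is arguably cleaner once the deletion lemmas are in hand, and the step you flagged does go through: removing a blue vertex $b$ leaves $(G-b)-(B^{[t-1]}\setminus\{b\})=G-B^{[t-1]}$ with identical white components, and removing a white vertex $w$ can only refine the white components, so in either case every surviving force still sees its target as the unique white neighbor in the relevant component. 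One small inaccuracy: for $k=0$ the minimal obstructions are not just $2K_2$ but all of $\mathcal M_1=\{2K_2,P_4,C_4\}$, which is consistent with your remark that disconnected obstructions appear.
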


\begin{proof}
Let $k$ be a non-negative integer and $\mathcal G$ be the set of all graphs $G$ such that $\throt_R (G)\leq |V(G)|-k-1$ and $|V(G)|\leq 4k+4$. We will prove the claim that if $\throt_R(G)<|V(G)|-k$ and $|V(G)|\geq k$, then $G$ contains a graph in $\mathcal G$ as an induced subgraph. By Lemma \ref{savings}, there exists a zero forcing set $B$ such that 
\[\sum_{i=1}^{\pt_R(G;B)} |B^{(i)}|-1  \geq k+1.\] Without loss of generality, assume that $B$ is a standard witness for $\throt_R(G)< |V(G)|-k$.
Let $r$ be the first time step at which $\sum_{i=1}^{r} |B^{(i)}|-1  \geq k+1.$  In fact, we can choose $\hat B ^{(r)}\subseteq B^{(r)}$ so that \[|\hat B^{(r)}|-1+\sum_{i=1}^{r-1} |B^{(i)}|-1 =k+1.\] To avoid cumbersome notation, let $\hat B^{(i)}= B\up i$ for each $1\leq i \leq r-1$ so that \[\sum_{i=1}^{r}|\hat B\up{i}| - 1 = k+1.\] Since $B$ is a standard witness for $\throt_R(G) < |V(G)| - k$, $r\leq k+1$. Let $H=G[X]$ where \[X=\bigcup_{i=1}^r U^{(i)}\cup \hat B^{(i)}.\]

First, we will show that $\throt_R (H)\leq |V(H)|-k-1$. Then, we will show that $|V(H)|\leq 4k+4$. This will prove that $H$ is in $\mathcal G$.

Let \[\hat B=\bigcup_{i=1}^r \left(U^{(i)}\setminus \bigcup_{j=1}^{i-1}\hat  B^{(j)}\right).\] We will prove that $\hat B\up i$ is blue after time step $i$ by induction on $i$, assuming that $\hat B$ is the initial zero forcing set. As a base case, $\hat B$ is a set of blue vertices in $H$ after $0$ time steps by construction. We will assume that the sets $\hat B^{(j)}$ for $0\leq j\leq i-1$ are blue at the beginning of time step $i$. This implies that $U^{(i)}$ is blue at the beginning of time step $i$. Since $H$ is an induced subgraph of $G$ that contains $U^{(i)}$ and $\hat B^{(i)}$, the set $U^{(i)}$ can force $\hat B^{(i)}$ in $H$. Therefore, after time step $i$, the vertices in $\hat B^{(i)}$ are blue in $H$. Thus, $\hat B$ can force all of $H$ in at most $r$ time steps. Now, 
\[\throt_R(H)\leq |V(H)|-\sum_{i=1}^r |\hat B^{(i)}|-1= |V(H)|-k-1\]
by Lemma 4.4.

Notice that $|U^{(i)}|\leq |\hat B^{(i)}|$ by the standard color change rule (this is an equality for standard zero forcing, but can be an inequality for PSD zero forcing). Therefore,
\[ |X|\leq \sum_{i=1}^{r} |U^{(i)}|+|\hat B^{(i)}|\leq 2\sum_{i=1}^{r}|\hat B^{(i)}|= 2(k+1+r)\leq 4k+4.\] Thus, $H=G[X]$ is a graph in $\mathcal G$. 
\end{proof}

Notice that it is substantially easier to get a handle on the set of graphs $G$ such that $\throt(G)\leq |V(G)|-k-1$ and $|V(G)|\leq 4k+4$, than the infinite set of graphs $G$ with $\throt(G)\geq |V(G)|-k$ and $|V(G)|\geq k$. In particular  \cite[Theorem 4.1]{JCThrot} provides a characterization of graphs $G$ such that $\throt(G)\leq t$ for a positive integer $t$. Using this Theorem, there is a characterization of graphs $G$ with $\throt(G)\leq t:= 4k+4-k-1=3k+3$, which contains the set $\mathcal G$ used in the proof of Theorem \ref{finite}. Additionally, Theorem \ref{psdCharThm} is the PSD analog to \cite[Theorem 4.1]{JCThrot}, and can be used in the same way in relation to Theorem \ref{finite}.

Now that we have established that graphs $G$ with $\throt(G)\geq |V(G)|-k$ can be characterized by a finite set of forbidden subgraphs $\mathcal G_k$, we want to establish what these sets $\mathcal G_k$ can look like. To this end, consider the following definition. 

\begin{defn}
A graph $G$ is an
\emph{ $a$-accelerator} for integer $a\geq 1$ if $V(G)$ can be partitioned into sets $S$ and $T$, each of size $a+1$, such that there exists a matching between $S$ and $T$, and the only edges between $S$ and $T$ are in this matching.
\end{defn}

See Figure \ref{fig:accelerator} for an example of an $a$-accelerator.

\begin{figure}[H] 
\begin{center}
\scalebox{.7}{\includegraphics{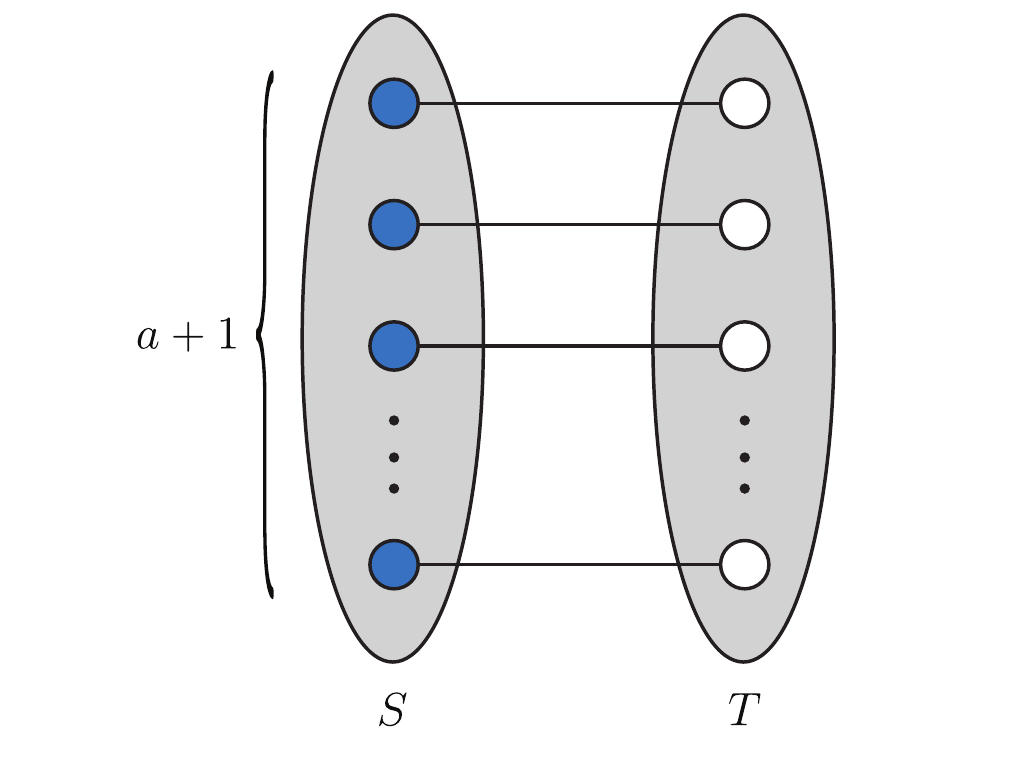}}
\caption{This is a general depiction of an $a$-accelerator. The grey areas may have any configuration of edges.}\label{fig:accelerator}
\end{center}
\end{figure}

Notice that if $G$ is an $a$-accelerator, then $\throt(G)\leq |V(G)|-a$ by using $S$ as a zero forcing set. Therefore, if $G$ is a $(k+1)$-accelerator, then  $\mathcal G_k$ contains an induced subgraph of $G$. Let $\calm_a$ be the set of $a$-accelerator graphs. To generalize accelerator graphs the following definition.

\begin{defn}Let $a_1,\dots, a_r$ be positive integers. A graph $G$ is an \emph{$(a_1,\dots,a_r)$-accelerator graph} if  $V(G)$ can be written as $\bigcup_{i=1}^r S_i\cup T_i$ where $S_i$ and $T_i$ are sets of $a_i+1$ vertices for $1\leq i\leq r$ such that:
\begin{enumerate}
    \item $\{S_i\}^r_{i=1}$ is a set of disjoint sets,
    \item $\{T_i\}^r_{i=1}$ is a set of disjoint sets, and
    \item $T_i\cap S_j$ is empty whenever $i+1\neq j $.
\end{enumerate}
Furthermore, the edges of $G$ must be partitioned by $S_i,T_i$ for $1\leq i\leq r$ such that:
\begin{enumerate}\setcounter{enumi}{3}
\item  $S_i$ and $T_i$ are perfectly matched (no non-matching edges exist between $S_i$ and $T_i$),
\item some edges are contained in $S_i$ or  $T_i$,
\item some edges go from $S_i\setminus T_{i-1}$ to $\bigcup_{j=1}^{i-1} S_j\cup T_j$, and
\item  $S_i$ is dominated by $T_{i-1}$ for $2\leq i\leq r$.
\end{enumerate}
\end{defn}

Notice that each $G[S_i\cup T_i]$ is an $a_i$-accelerator for each $i$ by properties 4 and 5. Furthermore, the edges in $G$ are restricted so that \[B=S_1\cup \bigcup_{i=2}^r S_i\setminus T_{i-1}\] is a standard witness for 
$\throt(G)< |V(G)|-(a_1+\cdots +a_r)+1$. In particular, $B\up i= T_i$ by property 7. Let $\calm_{a_1,\dots,a_r}$ denote the set of $(a_1,\dots,a_r)$-accelerator graphs. See Figure \ref{fig:joinedAccelerators} for an example of a $(2,3,2)$-accelerator.
\vspace{-.1 in}
\begin{figure}[H] 
\label{fig:joinedAccelerators}
\begin{center}
\scalebox{.65}{\includegraphics{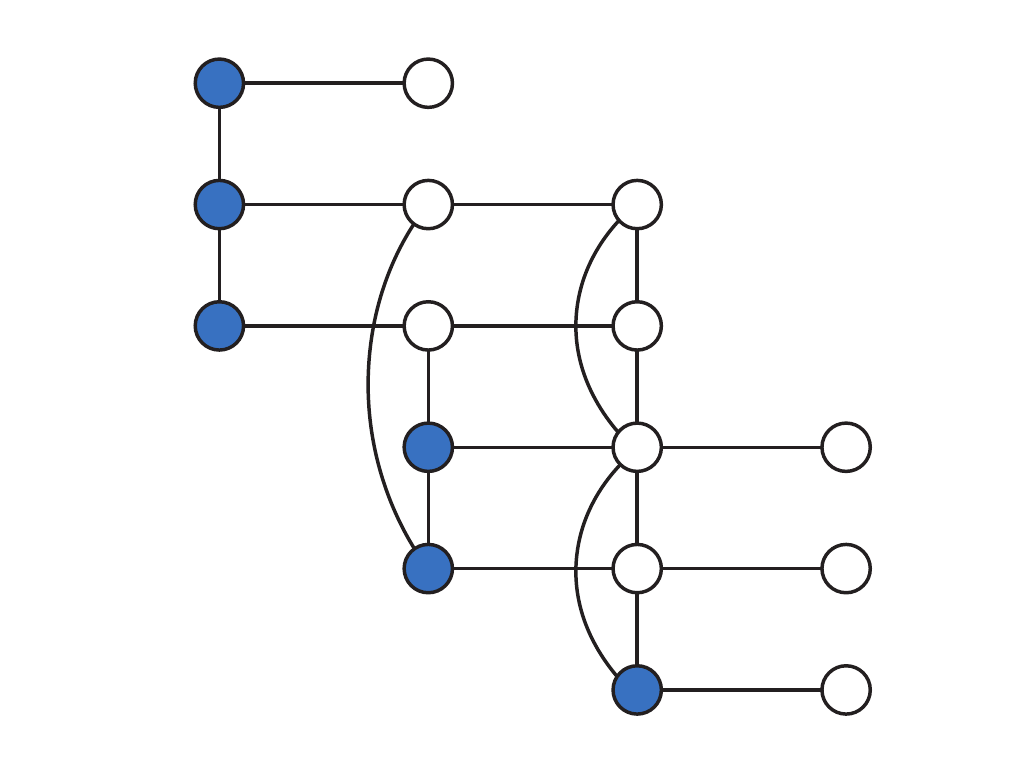}}\end{center}
\vspace{-.1 in}
\hspace{3 in} $M$
\caption{A graph $M\in \calm_{2,3,2}$.}
\end{figure}

\begin{obs}\label{trick}
If $M\in \calm_{a_1,\cdots,a_r}$, then \[\throt (M)\leq |V(M)|-(a_1+\cdots+a_r).\]
\end{obs}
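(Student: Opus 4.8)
The plan is to take $B = S_1 \cup \bigcup_{i=2}^{r}(S_i \setminus T_{i-1})$ as a standard zero forcing set, exactly as suggested in the discussion preceding the statement, and to verify directly that it forces all of $M$ within $r$ time steps with $B^{(i)} = T_i$. First I would record the relevant counts. By property 3 the sets $T_i$ and $S_j$ meet only when $j=i+1$, and the only $S$-part of $B$ that could intersect $T_{i-1}$ is $S_i$, from which $T_{i-1}$ has been deleted; together with property 2 this shows that $B$ and $\bigcup_{i=1}^r T_i$ are disjoint and cover $V(M)$, i.e.\ they partition $V(M)$. Hence the set of initially white vertices is exactly $\bigcup_{i=1}^r T_i$, of size $\sum_{i=1}^r (a_i+1) = \bigl(\sum_{i=1}^r a_i\bigr) + r$, so that $|B| = |V(M)| - \bigl(\sum_{i=1}^r a_i\bigr) - r$.

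The heart of the argument is a forcing claim proved by induction on $i$: at the start of time step $i$ the set $S_i$ is entirely blue, and each vertex of $S_i$ forces its matching partner in $T_i$, so $T_i$ becomes blue at step $i$. For the first half, write $S_i = (S_i \setminus T_{i-1}) \cup (S_i \cap T_{i-1})$; the first piece lies in $B$, while the second lies in $T_{i-1}$, which is blue after step $i-1$ by induction (for $i=1$ the second piece is empty). For the forcing, I would run through the edge types permitted by properties 4--7 to show that the unique white neighbor of a vertex $s \in S_i$ at the start of step $i$ is its matching partner in $T_i$. By property 4 the only neighbor of $s$ in $T_i$ is that partner; every neighbor of $s$ arising from a property 6 edge lies in $B$ or in some $S_j \cup T_j$ with $j<i$, all of which are blue at step $i$; the neighbors of $s$ coming from the domination in property 7 lie in $T_{i-1}$, again blue; and the edges inside $S_i$ permitted by property 5 contribute only blue neighbors. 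The key point to nail down is that $S_i$ has no edge to any $T_{i'}$ with $i' > i$, which is excluded by each of properties 4, 6, and 7. Consequently every $s\in S_i$ has exactly one white neighbor, so the $|S_i|$ matching forces are simultaneously valid standard forces and $B^{(i)} = T_i$.

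Granting the forcing claim, the process terminates after $r$ steps (the hypothesis $a_r \geq 1$ gives $T_r \neq \emptyset$, so the last layer genuinely forces at step $r$), whence $\pt(M;B) \leq r$. Combining this with the count above,
\[
\throt(M) \leq |B| + \pt(M;B) \leq \Bigl(|V(M)| - \sum_{i=1}^r a_i - r\Bigr) + r = |V(M)| - \sum_{i=1}^r a_i ,
\]
as claimed. I expect the main obstacle to be the bookkeeping in the inductive step, specifically confirming that no vertex of $S_i$ is adjacent to a white vertex outside $T_i$; this is precisely the place where the restrictive edge conditions of properties 3--7 must all be invoked, whereas the partition count and the concluding inequality are routine.
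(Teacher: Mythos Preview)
Your proposal is correct and follows exactly the approach the paper indicates: the paper does not give a separate proof of the observation but states in the paragraph immediately preceding it that $B=S_1\cup\bigcup_{i=2}^{r}(S_i\setminus T_{i-1})$ is a standard witness with $B^{(i)}=T_i$, which is precisely what you verify in detail. Your bookkeeping on the partition of $V(M)$ and on the edge types allowed by properties 3--7 is accurate, and the resulting inequality $\throt(M)\le |B|+r=|V(M)|-\sum a_i$ is the intended one.
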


With this observation, we can build a particular set of forbidden subgraphs $\G_k$.

\begin{thm}\label{accelerator}
Let \[\mathcal G_k=\bigcup_{a_1+\cdots+a_r=k+1} \calm_{a_1,\cdots,a_r}.\] Then $\throt(G)\geq |V(G)|-k$ if and only if $G$ does not contain a graph in $\mathcal G_k$ as an induced subgraph.
\end{thm}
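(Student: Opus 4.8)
The plan is to prove both implications, leaning on the machinery already in place. For the direction that a forbidden subgraph forces low throttling, suppose $G$ contains an induced copy of some $M \in \calm_{a_1,\dots,a_r}$ with $a_1+\cdots+a_r = k+1$. By Observation \ref{trick}, $\throt(M) \leq |V(M)| - (k+1) < |V(M)| - k$. The proof of Proposition \ref{forbidden} shows that being a witness for $\throt < |V| - k$ is inherited by any graph containing $G$ as an induced subgraph, since a forcing set of $M$ extends to one of $G$ by adjoining $V(G)\setminus V(M)$ to the initial blue set without slowing propagation. Hence $\throt(G) < |V(G)| - k$, and contrapositively $\throt(G) \geq |V(G)|-k$ implies $G$ has no induced member of $\mathcal G_k$.

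For the converse I would assume $\throt(G) < |V(G)|-k$ and build an induced accelerator. By Lemma \ref{savings} together with the standard-witness reduction, fix a standard witness $B$ with $r = \pt(G;B)$, with $|B^{(i)}| \geq 2$ for each $i$, and with $\sum_{i=1}^r (|B^{(i)}|-1) \geq k+1$. Writing $U^{(i)}$ for the set of forcing vertices at time $i$, set $T_i := B^{(i)}$ and $S_i := U^{(i)}$. Because standard forces are carried out by distinct vertices, each forcing exactly once, the forces give a perfect matching between $S_i$ and $T_i$, so $|S_i| = |T_i|$. Put $X = \bigcup_i (S_i \cup T_i)$; the goal is to show $G[X]$, after trimming, is an $(a_1,\dots,a_r)$-accelerator with $a_i = |T_i| - 1$.

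Several axioms fall out directly from the color change rule. Properties 1 and 2 hold because each vertex forces at most once and the $B^{(i)}$ partition the forced vertices. Property 4 is immediate: every vertex of $T_i = B^{(i)}$ is white at the start of time step $i$, so each forcer $s \in S_i$ has its matched vertex as its unique neighbor in $T_i$, leaving no non-matching $S_i$--$T_i$ edges. Property 7 also holds, since a forcer $s$ acting at time $i \geq 2$ was either forced at time $i-1$ (so $s \in T_{i-1}$) or, having waited, had a neighbor turn blue at step $i-1$ (so $s$ is adjacent to $T_{i-1}$); in either case $S_i \subseteq N[T_{i-1}]$.

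The crux, and the step I expect to be the main obstacle, is realizing property 3 together with the edge partition of properties 5 and 6. Property 3 can fail precisely at \emph{late forcers}: a vertex forced at time $j$ that does not force until time $i > j+1$ lies in $T_j \cap S_i$, and such a vertex can also carry long-range edges between non-consecutive levels that no category of the partition allows. To repair this I would pass to a carefully chosen induced subgraph in which the forcing is \emph{synchronized}, deleting the blocking vertices that delay a forcer so that depth equals time and every force occurs one step after its forcer turns blue, in the spirit of the extension technique of \cite[Section 3]{JCThrot}. Once synchronized, every forcer at level $i$ lies in $B \cup T_{i-1}$ (yielding property 3), and every surviving edge runs inside a level, across the matching, between $T_{i-1}$ and $S_i$, or from an initial-blue vertex of $S_i \setminus T_{i-1}$ back to earlier levels (yielding properties 5 and 6). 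A final trimming step, discarding matched pairs while keeping each $|T_i| \geq 2$, reduces the total savings to exactly $k+1$ so that $\sum a_i = k+1$ and the result lies in $\mathcal G_k$. Ensuring that the synchronization and trimming do not drop the surviving savings below $k+1$ is the delicate part of the argument.
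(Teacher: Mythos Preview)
Your approach mirrors the paper's. Both directions use the same machinery: Observation~\ref{trick} together with Proposition~\ref{forbidden} for the forward implication, and for the converse the extraction of $H=G[X]$ with $S_i=U^{(i)}$, $T_i=\hat B^{(i)}$ from a standard witness. The paper's trimming is tidier than what you propose: it reuses the construction of Theorem~\ref{finite} verbatim, stopping at the \emph{first} time $r$ where cumulative savings reach $k+1$ and shrinking only the final layer to $\hat B^{(r)}\subseteq B^{(r)}$ so that the total is exactly $k+1$, rather than running the process to full propagation time and pruning matched pairs afterward.

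Where you diverge is property~3, and here you are actually more careful than the paper. The paper's proof checks the cardinalities, the matchings (property~4), and the domination (property~7), and then declares $H$ an accelerator without discussing property~3 or the full edge partition. Your worry about late forcers is legitimate: a vertex forced at time~$j$ can wait until some time $i>j+1$ to perform its force, landing in $T_j\cap S_i$. However, your proposed repair---deleting the blocking vertices so as to synchronize the process---does not work as stated. The blockers of a late forcer are themselves typically members of some $\hat B^{(\ell)}$ or matched targets in a later $T_i$, and removing them destroys matchings or drops the total savings below $k+1$. (Concretely: take disjoint edges $a_\ell b_\ell$ for $\ell=1,2,3$ and add $b_1c_1,\ b_2c_2,\ b_3c_1,\ b_3d,\ c_1e$; with $B=\{a_1,a_2,a_3\}$ the vertex $b_3$ is forced at time~$1$ but cannot force until time~$3$, while both of its blockers $c_1$ and $d$ are essential to later layers.) So the obstacle you identify is real, but neither your synchronization sketch nor the paper's argument, as written, resolves it; a correct fix would reassign such a late forcer to $S_i\setminus T_{i-1}$ and drop it (and its original forcer) from level~$j$, which requires arguing that the lost savings can be absorbed.
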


\begin{proof}
By Observation \ref{trick}, if $G$ contains a graph in $\mathcal G_k$, then $\throt(G)< |V(G)|-k$. We will continue by assuming that $\throt(G)<|V(G)|-k$ and endeavor to find an induced subgraph of $G$ that is in $\mathcal G_k$. Define $H=G[X]$ where \[X=\bigcup_{i=1}^r U^{(i)}\cup \hat B^{(i)}\] as in the proof of Theorem \ref{finite} and let $a_i=|U\up i|-1$. We claim that $H \in \mathcal G_k.$ In particular, $U\up i, B\up i$ for $1\leq i\leq r$ is a decomposition of the vertices of $H$ such that $H$ is  an $(a_1,\dots,a_r)$-accelerator (where $ U\up i=S_i$ and $ B\up i= T_i$). First, notice that $U^{(i)}$ and $\hat B^{(i)}$ have the same cardinality $|U^{(i)}|=a_i+1$. Furthermore, $U\up i$ and $\hat B\up i$ are matched by a set of forces and the only edges from $U^{(i)}$ to $\hat B^{(i)}$ are in this matching. That is, \[H[U^{(i)}\cup \hat B^{(i)}]\in \calm_{a_i}.\]

Notice that vertices in $U^{(i+1)}\setminus B^{(i)}$ do not perform a force until time step $i+1$ by definition. Therefore, each vertex in $U^{(i+1)}\setminus B^{(i)}$ must be adjacent to a vertex in $B^{(i)}$. Thus, 
$H$ is an $(a_1,\dots, a_r)$-accelerator and $H\in \mathcal G_k.$
\end{proof}

Note that the set \[\mathcal G_k=\bigcup_{a_1+\cdots+a_r=k+1} \calm_{a_1,\cdots,a_r}\] is not a minimum set of forbidden subgraphs that characterize $\throt(G)\geq |V(G)|-k$. Consider the fact that $K_2\square P_4$ is contained in $\calm_{1,1,1}$, $\calm_{1,1}$, and $\calm_3.$ This means that $K_2\square P_4\in \mathcal G_1$ even though $\throt(K_2\square P_4)\leq 5$. The issue is that $K_2\square P_4$ contains an induced copy of $K_2\square P_3$, which is a graph in $\calm_2$ and, therefore, also a graph in $\mathcal G _1$.

Another instructive observation is that we can use the sets $\mathcal G_i$ to characterize graphs with $\throt(G)=|V(G)|-k$. In particular, we have the following corollary:

\begin{cor}\label{exact}
Let $G$ be a graph and \[\mathcal G_k=\bigcup_{a_1+\cdots+a_r=k+1} \calm_{a_1,\cdots,a_r}.\] Then $\throt(G)=|V(G)|-k$ if and only if $G$ does not contain any graph in $\mathcal G_k$ as an induced subgraph, but contains a graph in $\mathcal G_{k-1}$ as an induced subgraph. 
\end{cor}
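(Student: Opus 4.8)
The plan is to read the equality $\throt(G) = |V(G)| - k$ as the conjunction of the two inequalities $\throt(G) \geq |V(G)| - k$ and $\throt(G) \leq |V(G)| - k$, and to convert each inequality into a containment statement about $\mathcal{G}_k$ and $\mathcal{G}_{k-1}$ using Theorem \ref{accelerator}. Since Theorem \ref{accelerator} already characterizes $\throt(G) \geq |V(G)| - k$ as ``$G$ contains no induced member of $\mathcal{G}_k$,'' the lower bound is immediate, and the work lies entirely in rewriting the upper bound $\throt(G) \leq |V(G)| - k$ so that Theorem \ref{accelerator} can be applied a second time.

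For the upper bound, I would use that $\throt(G)$ and $|V(G)|$ are integers, so $\throt(G) \leq |V(G)| - k$ holds precisely when $\throt(G) < |V(G)| - (k-1)$, i.e., precisely when $\throt(G) \geq |V(G)| - (k-1)$ fails. Applying Theorem \ref{accelerator} with parameter $k-1$, the statement $\throt(G) \geq |V(G)| - (k-1)$ is equivalent to ``$G$ contains no induced member of $\mathcal{G}_{k-1}$,'' so its negation is exactly ``$G$ contains an induced member of $\mathcal{G}_{k-1}$.'' Combining the two translations, $\throt(G) = |V(G)| - k$ holds if and only if $G$ contains no induced member of $\mathcal{G}_k$ (the lower bound) and $G$ does contain an induced member of $\mathcal{G}_{k-1}$ (the upper bound), which is the claim.

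The delicate point I expect to need care with is the integer arithmetic together with the boundary case. The equivalence $\throt(G) \leq |V(G)| - k \iff \throt(G) < |V(G)| - (k-1)$ relies on throttling numbers being integer valued, which is where I would be explicit rather than leave it implicit. More importantly, applying Theorem \ref{accelerator} at parameter $k-1$ requires $k \geq 1$ so that $\mathcal{G}_{k-1}$ is indexed over a nonempty set of compositions $a_1 + \cdots + a_r = k$. The degenerate case $k = 0$ must be handled separately (or excluded by a standing assumption $k \geq 1$): there $\throt(G) = |V(G)|$ is equivalent to $\throt(G) \geq |V(G)|$ alone, and no ``$\mathcal{G}_{-1}$'' condition should appear. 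Aside from this boundary caveat, the corollary is a direct two-fold application of Theorem \ref{accelerator}, so I would not expect any genuinely new structural argument to be required.
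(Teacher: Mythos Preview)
Your proposal is correct and matches the paper's approach: the paper states this corollary without proof, treating it as the immediate consequence of two applications of Theorem \ref{accelerator} that you spell out. Your attention to the integer arithmetic and the $k=0$ boundary case is more careful than the paper itself, which leaves both points implicit.
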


In so far as we are concerned with standard zero forcing, the proof of Theorem \ref{accelerator} is a more detailed version of the proof of Theorem \ref{finite}. Theorem \ref{accelerator} highlights the role of accelerator graphs during the throttling process. In particular, given a large graph with relatively low throttling number ($n$ large, $k$ significant, and $\throt(G)=n-k$), we expect to see sufficiently many or sufficiently large accelerator graphs in the sense that $G$ must contain a subgraph in $\mathcal G_{k-1}$. However, $G$ cannot contain too many or too large accelerators in the sense that $G$ does not contain a graph in $\mathcal G_k$.  This is the moral captured in Corollary \ref{exact}. 

We can relate accelerator graphs to the construction of graphs with low standard throttling number in \cite{JCThrot}. In particular, \cite[Theorem 4.1]{JCThrot} provides a ``top down" construction of graphs with a specific throttling number. Accelerator graphs provide a ``bottom up" description of the structural properties of graphs with a specific throttling number. Together, these characterizations show the dual nature of high versus low throttling number. That is, understanding which graphs that  achieve $\throt(G)\geq |V(G)|-k$ is dual to understanding the graphs that achieve $\throt(G)\leq k$.

\end{section}

\begin{section}{Concluding Remarks}\label{conclusion}

To conclude our paper, we would like to make a few tangential remarks about our results that inform possible directions of future work. The first of these remarks concerns spectral graph theory, and is a path we stumbled upon by chance. Consider the following result:

\begin{cor}[Corollary 8.1.8 in \cite{CRS_2010}]\label{spectral_radius}
Let $G$ be a connected graph with maximal spectral radius among connected graphs on $n$ vertices with $m$ edges. Then $G$ does not contain $2K_2, P_4, C_4$ as an induced subgraph. 
\end{cor}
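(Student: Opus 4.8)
The plan is to exploit the Perron eigenvector of the extremal graph together with an edge-relocation (rotation) argument. Let $G$ be a connected maximizer, let $A$ be its adjacency matrix, and let $\rho$ be its spectral radius. Since $G$ is connected, $A$ is irreducible, so by Perron--Frobenius $\rho$ is a simple eigenvalue with a strictly positive eigenvector $x$; normalize $\|x\|=1$. The engine of the proof is the Rayleigh principle $\rho(H)\ge x^{\top}A(H)x$ for every graph $H$ on $V(G)$, together with the fact that, when $H$ is connected with $\rho(H)=\rho$, equality forces $x$ to be a Perron vector of $H$.

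First I would establish the key structural lemma: for any two vertices $u,v$ of the extremal $G$, the sets $N(u)\setminus\{v\}$ and $N(v)\setminus\{u\}$ are comparable under inclusion (``nested neighborhoods''). Suppose not, so that there are $p\in N(u)\setminus(N(v)\cup\{v\})$ and $q\in N(v)\setminus(N(u)\cup\{u\})$, and assume without loss of generality that $x_u\ge x_v$. Consider the relocation $G'=G-vq+uq$, which preserves the number of edges (note $uq\notin E(G)$ since $q\notin N(u)$). Then $x^{\top}A(G')x-x^{\top}Ax=2x_q(x_u-x_v)\ge 0$, so $\rho(G')\ge\rho$. If $G'$ is connected, maximality forces $\rho(G')=\rho$, whence $A(G')x=\rho x$; subtracting $Ax=\rho x$ and reading off the $u$-coordinate of $(A(G')-A)x=0$ gives $x_q=0$, contradicting $x>0$. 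This is the clean core of the argument.

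Then I would cash out the lemma against each forbidden subgraph by exhibiting an incomparable pair. For an induced $2K_2$ with edges $ab,cd$, the pair $(a,c)$ is incomparable, since $b$ is private to $a$ and $d$ is private to $c$. For an induced $P_4$ on $a$--$b$--$c$--$d$, the pair $(a,d)$ is incomparable, with $b$ private to $a$ and $c$ private to $d$. For an induced $C_4$ on $a$--$b$--$c$--$d$--$a$ the two opposite vertices share a neighborhood, so instead I would take the adjacent pair $(a,b)$: here $d$ is private to $a$ and $c$ is private to $b$, giving incomparability. In each case the lemma is violated, so the extremal $G$ can contain none of $2K_2$, $P_4$, $C_4$ as an induced subgraph.

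The step I expect to be the main obstacle is guaranteeing that the relocated graph $G'$ is connected, which is the only gap in the core argument. Disconnection happens precisely when $vq$ is a bridge and the higher-weight endpoint $u$ lies in the $q$-side component of $G-vq$ (the pendant configurations are special instances of this). I would dispatch this either by replacing the single relocation with a connectivity-preserving pair of relocations of equal net edge count, chosen using the weights $x_u,x_v,x_q$ so that the Rayleigh quotient does not decrease, or by the following reduction: if $G'$ is disconnected then $\rho(G')$ equals the spectral radius of its largest component $D$, and since $\rho(D)\ge\rho$ one may re-form a connected graph on the same vertex and edge counts by rerouting an edge of a smaller component so as to bridge the pieces, producing a connected competitor with spectral radius at least $\rho$ to which the eigenvector-subtraction identity again applies. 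Pinning down such a reroute in every bridge configuration is the part that requires genuine care; the Perron-weight inequality and the coordinate computation $(A(G')-A)x=0$ carry out everything else.
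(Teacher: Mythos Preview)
The paper does not prove this statement: it is quoted verbatim as Corollary~8.1.8 from the textbook of Cvetkovi\'c, Rowlinson, and Simi\'c and is used only as a black box to derive Corollary~\ref{th_spectral}. So there is no in-paper proof to compare against.

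On its own merits, your plan is the standard one---use the Perron vector together with an edge-rotation (Kelmans-type shift) to force the ``nested neighborhood'' property, which is exactly the characterization of $(2K_2,P_4,C_4)$-free graphs (threshold graphs). The Rayleigh computation and the eigenvector-subtraction step are correct as stated, and your reductions from each forbidden subgraph to an incomparable pair are fine.

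You are right that connectivity of the rotated graph is the genuine obstacle, and your proposed fixes do not yet close it. The symmetric-rotation idea (try $G-vq+uq$ or $G-up+vp$) can fail simultaneously: for the induced $P_4$ on $v\text{--}q\text{--}p\text{--}u$ with the incomparable pair $(u,v)$, both rotations yield a triangle plus an isolated vertex. Your second fix (pass to the large component $D$ of $G'$ and rebuild a connected competitor) breaks in exactly this case, since $D$ already carries all $m$ edges and you must add vertices without adding edges, which cannot produce a connected graph. The clean way through is not to rotate a single edge but to perform the full shift: assuming $x_u\ge x_v$, move \emph{every} private neighbor of $v$ (relative to $u$) over to $u$. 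One then argues either that connectivity survives (handling separately the case $uv\notin E(G)$ with $N(u)\cap N(v)=\emptyset$, where one keeps a single edge at $v$), or that a strict increase in the Rayleigh quotient forces a strictly larger spectral radius on a proper connected subgraph, which can then be padded back to $n$ vertices and $m$ edges with a further strict increase. Either route requires a short but careful case analysis that your sketch has not yet supplied.
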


The family $R=\{2K_2, P_4,C_4\}$ is similar to the family we found in Theorem \ref{th=n} (also note that $\calm_1=\{2K_2, P_4,C_4\}$). In fact, $R$ can also be used as a family of forbidden subgraphs to characterize graphs $G$ with $\throt(G)=|V(G)|$ by Theorem \ref{accelerator}. With this in mind, we obtain a neat little result.

\begin{cor}\label{th_spectral}
If $G$ is a graph with maximal spectral radius for its adjacency matrix among connected graphs on $n$ vertices and $m$ edges, then $\throt(G)=n$.
\end{cor}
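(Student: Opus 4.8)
The plan is to combine Corollary \ref{spectral_radius} with one of the forbidden–induced-subgraph characterizations of $\throt(G) = n$ established earlier in this section. Corollary \ref{spectral_radius} guarantees that a connected graph $G$ of maximal spectral radius among connected graphs on $n$ vertices with $m$ edges contains no induced $2K_2$, $P_4$, or $C_4$. So the whole task reduces to translating this forbidden family into the language of throttling.

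The most direct route is through Theorem \ref{accelerator} applied with $k = 0$. As the preceding discussion records, $\calm_1 = \{2K_2, P_4, C_4\}$, and since the only way to write $a_1 + \cdots + a_r = k+1 = 1$ with positive integers is $r = 1$ and $a_1 = 1$, we have $\mathcal{G}_0 = \calm_1 = \{2K_2, P_4, C_4\}$. Corollary \ref{spectral_radius} says precisely that $G$ contains none of these three graphs as an induced subgraph, so Theorem \ref{accelerator} (with $k=0$) immediately yields $\throt(G) \geq |V(G)| = n$. Combining this with the trivial bound $\throt(G) \leq n$ gives $\throt(G) = n$.

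Alternatively, one may route through Theorem \ref{th=n}, which characterizes $\throt(G) = n$ by the absence of induced $P_4$, $C_4$, or bowtie graphs. Corollary \ref{spectral_radius} directly supplies the absence of induced $P_4$ and $C_4$, so the only additional point to verify is that $G$ has no induced bowtie. This follows because a bowtie contains an induced $2K_2$: deleting the central vertex of a bowtie leaves the two triangle edges as a pair of disjoint edges with no edges between them. Hence forbidding $2K_2$ already forbids the bowtie, and Theorem \ref{th=n} applies to give $\throt(G) = n$.

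There is essentially no obstacle here; all the genuine content is carried by Corollary \ref{spectral_radius}, and the remaining work is the bookkeeping of matching the forbidden family $\{2K_2, P_4, C_4\}$ to $\mathcal{G}_0$ (or, equivalently, observing that the bowtie contains an induced $2K_2$). The only step requiring care is invoking the correct characterization and confirming that the spectral result's forbidden family coincides with the relevant accelerator family $\calm_1$.
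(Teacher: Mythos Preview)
Your proposal is correct, and your ``alternative'' route via Theorem~\ref{th=n} is exactly the paper's proof: invoke Corollary~\ref{spectral_radius}, observe that a bowtie contains an induced $2K_2$, and apply Theorem~\ref{th=n}. Your primary route through Theorem~\ref{accelerator} with $k=0$ is also valid and is in fact alluded to in the text immediately preceding the corollary, though the paper's formal proof opts for the Theorem~\ref{th=n} version.
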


\begin{proof}
By Corollary \ref{spectral_radius}, $G$ is connected but does not contain $2K_2,P_4,C_4$ as an induced subgraph. Since $G$ does not contain $2K_2$ as an induced subgraph, $G$ does not contain a bowtie graph as an induced subgraph. Therefore, by Theorem \ref{th=n}, $\throt(G)=n$.
\end{proof}

This is remarkable because zero forcing has its roots in studying the spectrum of symmetric matrices \cite{AIM}. It is unclear whether the converse to Corollary \ref{spectral_radius} is true. However, Corollary \ref{th_spectral} gives a new line of attack for the converse of Corollary \ref{spectral_radius}. 

\begin{prob}
Let $G$ be a connected graph on $n$ vertices and $m$ edges. Does $\throt(G)=n$ imply that $G$ has maximal spectral radius among connected graphs on $n$ vertices and $m$ edges?
\end{prob}

Another direction for future work is to further study the $\zpf$ throttling number of a graph. Since $\thzpf(G) \leq \thp(G)$ for any graph $G$, a better understanding of $\zpf$ throttling would be useful in obtaining lower bounds for the PSD throttling number of a graph. The \emph{largeur d'arborescence of a graph $G$} (denoted $la(G)$) is defined in \cite{la} as the minimum $k$ such that $G$ is a minor of the Cartesian product of a complete graph on $k$ vertices and a tree. In \cite{Parameters}, it is shown that for any graph $G$, $la(G) = \zpf(G)$. Further research on the types of trees that can show up in the definition of largeur d'arborescence could be useful for studying $\zpf$ propagation and throttling.


Finally, we believe that Lemma \ref{savings} can be generalized for abstract color change rules. This is intuitive since the  relevant pieces in Lemma \ref{savings} can be derived from sets of forces, which exist in the context of an abstract color change rule. Unfortunately, Theorem \ref{finite} does not seem to hold for an abstract color change rule. For example, suppose that the color change rule $R$ is that a blue vertex $u$ can force a white neighbor $w$, if $u$ has at most $4$ neighbors. In this setting, any single vertex is an $R$ forcing set in $K_5$. However, when we consider $K_5$ as an induced subgraph of $K_{10}$, we see that the forcing behavior on the $K_5$ subgraph is influenced by the host graph. Every vertex in the $K_5$ subgraph has $9$ neighbors in the graph as a whole, and may not perform any forces. Furthermore, we cannot choose to color $V(K_{10})\setminus V(K_5)$ blue (as we have done for standard and PSD zero forcing) to recover the forcing behavior of the $K_5$ subgraph. Interestingly, this counterintuitive example invokes a local color change rule. That is, we do not have to look any further than $N[u]$ to determine whether $u$ can perform a force. These considerations motivate us to ask the following questions:

\begin{prob}
What conditions must be imposed on an abstract color change rule $R$ so that 
$\throt_R(G)\geq |V(G)|-k$ and $|V(G)|\geq k$ is a forbidden induced subgraph problem?
What further conditions are necessary to conclude that the set of graphs with $\throt_R(G)\geq |V(G)|-k$ can be characterized by a finite set of forbidden subgraphs?
\end{prob}

\begin{prob}
Does there exists a local abstract color change rule $R$ such that the set of graphs with $\throt_R(G)\geq |V(G)|-k$ and $|V(G)|\geq k$ can be characterized as a forbidden subgraph problem, but no finite set of graphs is a corresponding set of forbidden subgraphs?
\end{prob}

We recognize that these last two questions stray from the linear algebra roots of the zero forcing problem. However, we hope that these questions invite researchers interested in propagation, percolation, or general infection games on graphs to join the conversation. 

\end{section}

\section*{Acknowledgements}
This material is based upon work supported by the National Science Foundation under Grant Number DMS-183991. The authors would also like to thank the referees for their careful reading and helpful comments.

\end{document}